\documentclass[final,12pt]{elsarticle}
\usepackage{xcolor,soul,cancel}
\usepackage[color]{showkeys}
\usepackage{amssymb,amsmath,amsthm,mathtools}
\usepackage{graphicx,float,enumitem,hyperref}
\newlist{alphalist}{enumerate}{1} 
\setlist[alphalist]{label=(\alph*)} 
\journal{arXiv}

\definecolor{refkey}{rgb}{0,1,1}
\definecolor{labelkey}{rgb}{1,0,0}

\numberwithin{equation}{section}

\newtheorem{thm}{Theorem}[section]
\newtheorem{lem}[thm]{Lemma}
\newtheorem{cor}[thm]{Corollary}
\newtheorem{prop}[thm]{Proposition}
\newtheorem{rem}[thm]{Remark}
\newtheorem{ex}[thm]{Example}
\newtheorem{defi}[thm]{Definition}
\newtheorem{conj}[thm]{Conjecture}
\newcommand{\abs}[1]{\left\vert#1\right\vert}
\newcommand{\rk}{\operatorname{rank}}

\newcommand{\re}{\operatorname{Re}} 
\newcommand{\im}{\operatorname{Im}}

\newcommand{\eq} [1] {\begin{equation}\label{#1}\quad}
\newcommand{\en} {\end{equation}}
\newcommand{\scal}[1]{\langle#1\rangle}
\newcommand{\norm}[1]{\left\Vert#1\right\Vert}
\newcommand{\C}{\mathbb C}\newcommand{\R}{\mathbb R}

\newcommand{\diag}{\operatorname{diag}}

\newcommand{\Id}{I}
\newtheorem{quest}[thm]{Question}

\newcommand{\tr}{\operatorname{tr}}

\newcommand{\Ree}{\operatorname{Re}}

\newcommand{\inn}[2]{\langle #1,#2\rangle}

\theoremstyle{definition}

\theoremstyle{definition}

\makeatletter
\renewcommand*\env@matrix[1][*\c@MaxMatrixCols c]{%
  \hskip -\arraycolsep
  \let\@ifnextchar\new@ifnextchar
  \array{#1}}
\makeatother
\begin{document}
\begin{frontmatter}
\title{On circular Kippenhahn curves and the Gau--Wang--Wu conjecture about nilpotent partial isometries}

\author[eric]{Eric Shen}
\ead{erick.2013@yandex.ru}
\address[eric]{Moscow State University, Moscow, 119991, Russia }

\begin{abstract}
We study linear operators on a finite-dimensional space whose Kippenhahn curves consist of concentric circles centered at the origin. We say that such operators have Circularity property. One class of examples is rotationally invariant operators. To every operator with norm at most one, we associate an infinite sequence of partial isometries and study when Circularity property can be passed back and forth along that sequence. In particular, we introduce a class of operators for which every partial isometry in the aforementioned sequence has Circularity property, and show that this class is broader than the class of rotationally invariant operators. 
As a consequence, every such an operator provides a counterexample to the Gau--Wang--Wu conjecture about nilpotent partial isometries. We also discuss possible refinements of the conjecture. Finally, we propose a way to check whether a matrix is rotationally invariant, suitable for numerical experiments.

\end{abstract}

\end{frontmatter}

\section{Introduction}

Let $A$ be a bounded linear operator acting on a Hilbert space $\mathcal H$. Its \emph{numerical range} is
$$
W(A)=\{\scal{Ax,x}:x\in \mathcal H,\ \norm{x}=1\}.
$$
The numerical range is always convex. In finite dimensions it is the convex hull of a real algebraic curve $C(A)$, the \emph{Kippenhahn curve}, obtained as the envelope of the family
$$
\{e^{-i\theta}(\lambda_j(\theta) + i\R):\theta\in(-\pi,\pi],\ j=1,\dots,n\},$$
where $\lambda_j(\theta)$ are the eigenvalues of the Hermitian pencil
$$
H_A(\theta):=\Ree(e^{-i\theta}A)=\frac12\bigl(e^{-i\theta}A+e^{i\theta}A^*\bigr).
$$
We say that $A$ has \emph{Circularity property} if the spectrum of $H_A(\theta)$ is independent of $\theta$. Equivalently, the Kippenhahn curve of $A$ is a union of circles centered at the origin. 
There are many well known equivalent characterizations, e.g. $A$ has Circularity property if and only if its \textit{numerical shadow} is rotation invariant, see \cite{GaSe12} and \cite{DUNKL20112042}. We briefly recall more direct characterizations in Section 3, see Proposition~\ref{prop:PW} and Proposition~\ref{prop:circ_prop_det}.

There is a stronger symmetry notion.

\begin{defi}
A matrix $A\in M_n$ is \emph{rotationally invariant} if for every $\theta\in\R$ the matrices $A$ and $e^{i\theta}A$ are unitarily similar.
\end{defi}

Rotational invariance of an operator is a strict structural condition, namely, a rotationally invariant operator is unitarily similar to a block shift. See Theorem~\ref{thm:rot_inv_struct}. The paper \cite{GWW} raised three conjectures on partial isometries, including the following.

\begin{conj}[Gau--Wang--Wu]\label{conj:GWW}
If $A$ is a unitarily irreducible nilpotent partial isometry whose numerical range is a circular disc centered at the origin, then $A$ is rotationally invariant.
\end{conj}
One of the other two conjectures was fully proved by Wegert and Spitkovsky in \cite{WeSp}, and there has also been partial progress on the last one, see e.g. \cite{HeSpitSu}, \cite{SuSWe} and recent preprint \cite{PSS}.

Gau, Wang and Wu proved Conjecture~\ref{conj:GWW} in dimensions up to $6$ and established a structural theorem that, in the canonical partial-isometry representation
$$
A=\begin{bmatrix}0&B\\0&C\end{bmatrix},\qquad B^*B+C^*C=\Id,
$$
rotational invariance of $A$ is equivalent to rotational invariance of $C$ \cite[Theorem~4.7]{GWW}. In contrast with the approach of Gau, Wang and Wu, in the present work we adopt the point of view where the primary object of investigation is the contraction $C$, while the whole partial isometry $A$ is considered to be a result of a certain construction $\mathcal{A}(C)$. 

First, in Section~3 we recall how to construct a partial isometry $\mathcal{A}(C)$ from any contraction $C$, see Definition~\ref{defi:const_alpha}. Since a partial isometry is itself a contraction, one may iterate the procedure. That leads to the \textit{partial isometry tower}
$$
T_0:=C,\qquad T_{j+1}:=\mathcal A(T_j),
$$
 We prove that each $T_j$ has Circularity property if and only if the \emph{defect pencil}
$$
H_C(\theta)+\tau(\Id-C^*C)
$$
is isospectral for every real $\tau$, see Theorem~\ref{thm:tower-equiv}. It provides a broad class of counterexamples to Conjecture~\ref{conj:GWW} that have Circularity property, that is, their Kippenhahn curves consist only of concentric circles. See e.g. Example~\ref{ex:tower-counterexample}.  
   
    In Section~4, we investigate further how Circularity property of $C$ relates to that of $\mathcal{A}(C)$. In particular, we show that $\mathcal{A}(C)$ has Circularity property only if $C$ does, provided $C$ has size at most $4$. See Theorem~\ref{thm:Q-implies-P-low-dim} and Corollary~\ref{cor:A-to-C-low-dim} after it. We show that this is not necessarily the case for $C$ of size $5$, see Proposition~\ref{ex:rotationa_A_nonrot_C}.  Moreover, we show that the converse already fails for $C$ of size $4$, see Example~\ref{ex:P-not-Q}. Using results of this section, we construct a unitarily irreducible nilpotent partial isometry that has circular numerical range, yet does not have Circularity property. That is, its Kippenhahn curve is not a union of circles alone. See Proposition~\ref{prop:clean-counterexample}.

In Section 5 we address the following question: Given an operator with the Circularity property, how can one check whether it is rotationally invariant? Existing general criteria can be efficiently used in numerical experiments only for relatively small matrices. We recall existing structural results and suggest a test for rotational invariance suitable for numerical experiments with large operators, see Theorem~\ref{thm:gauwu_poly} and Corollary~\ref{cor:magnitude} after it. As an illustration, we apply our method to construct a family of matrices of arbitrary even size having Circularity property which are not rotationally invariant, see Theorem~\ref{thm:circular-family}.

\section{Preliminaries}\label{sec:prel}

\begin{defi}
    A linear operator $A \in M_n$ is called a partial isometry if it satisfies
\eq{pi} \norm{Ax}=\norm{x} \text{ for all } x\perp\ker A. \en 

\end{defi} 
We begin with a standard reduction.

\begin{prop}[Canonical form of a partial isometry]\label{prop:canonical-form}
Let $A$ be a partial isometry with nontrivial kernel. Then, after a unitary similarity,
$$
A=\begin{bmatrix}0&B\\0&C\end{bmatrix}
\qquad\text{on }\C^m\oplus\C^d,
$$
where $m=\dim\ker A$, $d=\dim(\ker A)^\perp$, and
\begin{equation}\label{eq:defect-relation}
B^*B+C^*C=\Id_d.
\end{equation}
Conversely, every matrix of this form satisfying \eqref{eq:defect-relation} is a partial isometry.
\end{prop}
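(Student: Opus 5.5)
The plan is to decompose $\C^n=\ker A\oplus(\ker A)^\perp$ and read off the block form directly. I will choose an orthonormal basis whose first $m$ vectors span $\ker A$ and whose remaining $d$ vectors span $(\ker A)^\perp$. The change of basis matrix $U$ is unitary, and $U^*AU$ is the matrix of $A$ in this basis. Since $A$ vanishes on $\ker A$, the first $m$ columns of $U^*AU$ are zero, which gives
$$U^*AU=\begin{bmatrix}0&B\\0&C\end{bmatrix},$$
where $\begin{bmatrix}B\\ C\end{bmatrix}$ is the $n\times d$ matrix of $A|_{(\ker A)^\perp}$. No assumption on the range of $A$ is needed. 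The hypothesis of a nontrivial kernel only guarantees $m\ge1$, so the zero block is actually present.

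Next I will derive \eqref{eq:defect-relation}. By \eqref{pi}, the restriction $A|_{(\ker A)^\perp}$ is an isometry from $\C^d$ into $\C^n$. For the matrix $V=\begin{bmatrix}B\\ C\end{bmatrix}$, this means $\norm{Vx}=\norm{x}$ for all $x\in\C^d$. Polarization then gives $V^*V=\Id_d$, that is, $B^*B+C^*C=\Id_d$. Equivalently, $\scal{(V^*V-\Id)x,x}=0$ for all $x$, and a Hermitian matrix with vanishing quadratic form is zero.

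For the converse, let $A=\begin{bmatrix}0&B\\0&C\end{bmatrix}$ with $V^*V=\Id_d$, where $V=\begin{bmatrix}B\\ C\end{bmatrix}$. Then $V$ is injective, so $\ker A=\C^m\oplus 0$ exactly and $(\ker A)^\perp=0\oplus\C^d$. For $x=(0,y)$ we get $\norm{Ax}^2=\norm{By}^2+\norm{Cy}^2=\scal{V^*Vy,y}=\norm{y}^2$, which is \eqref{pi}. Being a partial isometry is preserved under unitary similarity, so this also settles the "after a unitary similarity" phrasing.

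There is no real obstacle in this argument. The one point requiring care is the converse: we must confirm that the kernel is exactly $\C^m\oplus0$ and not larger. This uses the injectivity of $V$, which follows from $V^*V=\Id$.
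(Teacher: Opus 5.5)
The paper states this as a standard reduction and gives no proof. Your argument is the standard one and is correct: choose an orthonormal basis adapted to $\ker A\oplus(\ker A)^\perp$, then use polarization on $V=\begin{bmatrix}B\\ C\end{bmatrix}$ to get $V^*V=\Id_d$. One minor remark: in the converse you do not need to show $\ker A=\C^m\oplus 0$ exactly in order to verify \eqref{pi}, since $\ker A\supseteq\C^m\oplus0$ already gives $(\ker A)^\perp\subseteq 0\oplus\C^d$, where you have shown $A$ is isometric; the exact identification only confirms that $m=\dim\ker A$.
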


The bulk of our work is closely related to the following result of Gau, Wang, and Wu.

\begin{thm}[\cite{GWW}, Theorem~4.7]\label{thm:GWW-rot}
Let
$$
A=\begin{bmatrix}0&B\\0&C\end{bmatrix}
$$
be a partial isometry in canonical form. Then $A$ is rotationally invariant if and only if $C$ is rotationally invariant.
\end{thm}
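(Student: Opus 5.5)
The plan is to work directly with the unitaries implementing the rotations, using the fact that the kernel of a partial isometry is preserved by any unitary similarity between $A$ and $e^{i\theta}A$. Throughout, fix $\theta\in\R$ and write $A=\begin{bmatrix}0&B\\0&C\end{bmatrix}$ on $\C^m\oplus\C^d$ as in Proposition~\ref{prop:canonical-form}. Then $A^*A=0_m\oplus(B^*B+C^*C)=0_m\oplus \Id_d$ by \eqref{eq:defect-relation}, so $\ker A=\C^m\oplus 0$. (We may assume $\ker A\neq 0$; the other case is absorbed by allowing $m=0$.)

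\emph{Direction ``$A$ rotationally invariant $\Rightarrow$ $C$ rotationally invariant''.} Suppose $UAU^*=e^{i\theta}A$ with $U$ unitary. Then $U(\ker A)=\ker(UAU^*)=\ker(e^{i\theta}A)=\ker A$. Since $U$ is unitary, it also preserves $(\ker A)^\perp=0\oplus\C^d$, so $U=U_1\oplus U_2$ is block diagonal. Comparing blocks in $UAU^*=e^{i\theta}A$ gives $U_2CU_2^*=e^{i\theta}C$, together with $U_1BU_2^*=e^{i\theta}B$. The first identity is exactly the rotational invariance of $C$ at angle $\theta$.

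\emph{Direction ``$C$ rotationally invariant $\Rightarrow$ $A$ rotationally invariant''.} Suppose $VCV^*=e^{i\theta}C$ with $V\in M_d$ unitary. We look for $U=U_1\oplus V$, with $U_1\in M_m$ unitary, satisfying $U_1BV^*=e^{i\theta}B$, that is, $U_1B=e^{i\theta}BV$. The key observation is that $BV$ and $B$ have the same Gram matrix. Indeed, $C=e^{i\theta}V^*CV$, and hence $C^*C=V^*C^*CV$. Therefore
$$
(BV)^*(BV)=V^*B^*BV=V^*(\Id-C^*C)V=\Id-C^*C=B^*B .
$$
Two $m\times d$ matrices $X,Y$ with $X^*X=Y^*Y$ satisfy $\norm{Xx}=\norm{Yx}$ for all $x$. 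Consequently the map $Xx\mapsto Yx$ is a well-defined isometry from $\operatorname{ran}X$ onto $\operatorname{ran}Y$. These two subspaces have equal dimension, so their orthogonal complements in $\C^m$ do as well, and the isometry extends to a unitary $W$ on $\C^m$ with $WX=Y$. Applying this with $X=B$, $Y=BV$, and then setting $U_1=e^{i\theta}W$, we get $U_1B=e^{i\theta}BV$. Hence $U=U_1\oplus V$ satisfies $UAU^*=e^{i\theta}A$.

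The only step that is not immediate bookkeeping is the extension of $V$ to a unitary on the whole space, i.e.\ producing $U_1$. This rests on the identity $V^*C^*CV=C^*C$, which follows from the rotation relation, combined with the defect relation \eqref{eq:defect-relation}; I expect this to be the main point to check carefully. Since $\theta$ was arbitrary in both directions, the equivalence follows.
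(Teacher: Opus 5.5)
Your proof is correct in both directions. Since $A^*A=0_m\oplus\Id_d$, we have $\ker A=\C^m\oplus 0$, so any unitary with $UAU^*=e^{i\theta}A$ is block diagonal and its second block rotates $C$. Conversely, $V^*C^*CV=C^*C$ together with the defect relation gives $(BV)^*(BV)=B^*B$, and your extension argument then produces the unitary $U_1$ on all of $\C^m$, correctly allowing $\rk B<m$.

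The paper gives no proof of this statement; it simply quotes \cite[Theorem~4.7]{GWW}. The closest argument in the paper is the proposition in Section~\ref{sec:towers} stating that $\mathcal A(C)$ and $\mathcal A(C')$ are unitarily similar if and only if $C$ and $C'$ are. Take $C'=e^{i\theta}C$. Since $D_{e^{i\theta}C}=D_C$, conjugation by $e^{i\theta}\Id\oplus\Id$ carries $\mathcal A(e^{i\theta}C)$ to $e^{i\theta}\mathcal A(C)$, so that proposition gives the theorem for $A=\mathcal A(C)$. Its ``only if'' half is your kernel/compression argument. In its ``if'' half the first-summand unitary is just a restriction: $VCV^*=e^{i\theta}C$ forces $VD_CV^*=D_C$, so $V$ leaves $\im D_C$ invariant and commutes with $D_C^{1/2}$, which is how $B_C$ acts there.

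Your Gram-matrix lemma replaces that normalization. It works for an arbitrary factor $B$ with $B^*B=\Id-C^*C$, as in Proposition~\ref{prop:canonical-form}. You therefore need neither the remark that different choices of $B$ give unitarily similar partial isometries, nor a reduction of a general canonical form to $\mathcal A(C)$ (which would require handling extra zero summands when $\rk B<m$).
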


For later use we record the standard irreducibility criterion.

\begin{prop}[\cite{GWW}, Proposition~2.6]\label{prop:irr}
The matrix
$$
A=\begin{bmatrix}0&B\\0&C\end{bmatrix}
$$
is unitarily irreducible if and only if $C$ is unitarily irreducible and $B$ has full row rank.
\end{prop}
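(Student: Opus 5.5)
The statement is Proposition~\ref{prop:irr}: $A=\begin{bmatrix}0&B\\0&C\end{bmatrix}$ on $\C^m\oplus\C^d$ (with $B^*B+C^*C=\Id_d$, in the intended setting) is unitarily irreducible if and only if $C$ is unitarily irreducible and $B$ has full row rank. I will prove both directions by working with reducing subspaces, i.e. subspaces invariant under both $A$ and $A^*$. The key observation is that the first summand $\C^m\oplus 0$ is exactly $\ker A$, provided $B^*B+C^*C=\Id$ (so $\begin{bmatrix}B\\C\end{bmatrix}$ is an isometry and $\ker A=\C^m\oplus0$). Accordingly, $A^*A$ is the orthogonal projection $P$ onto $0\oplus\C^d$, which lies in the $*$-algebra generated by $A$. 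Every reducing subspace $\mathcal M$ of $A$ therefore reduces $P$ and splits as $\mathcal M=\mathcal M_1\oplus\mathcal M_2$ with $\mathcal M_1\subseteq\C^m$ and $\mathcal M_2\subseteq\C^d$.

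\emph{Necessity.} If $B$ lacks full row rank, I pick a unit $x\in\C^m$ with $B^*x=0$. Then $Ax=0$ because the first block column is zero, and $A^*x=\begin{bmatrix}0\\B^*x\end{bmatrix}=0$. So $\mathrm{span}\{x\oplus0\}$ is a reducing subspace. It is proper unless $m+d=1$, and in that case $d=0$, which is a degenerate case I would exclude. If instead $C$ reduces on a proper nonzero subspace $\mathcal N\subseteq\C^d$, I would build a reducing subspace for $A$ of the form $\mathcal M=\overline{B\mathcal N}\oplus\mathcal N$, more precisely $(B\mathcal N)\oplus\mathcal N$. I check invariance directly:
\begin{itemize}
\item $A(u\oplus v)=(Bv)\oplus(Cv)$ lies in $\mathcal M$ whenever $v\in\mathcal N$;
\item $A^*(Bw\oplus v)=0\oplus(B^*Bw+C^*v)$, and here $B^*Bw=w-C^*Cw\in\mathcal N$, using the defect relation~\eqref{eq:defect-relation} together with the fact that $\mathcal N$ reduces $C$.
\end{itemize}
Properness holds because $\mathcal N\ne\C^d$. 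This step is where the defect relation is essential.

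\emph{Sufficiency.} Take a reducing $\mathcal M=\mathcal M_1\oplus\mathcal M_2$ as above.
\begin{itemize}
\item From $A$-invariance applied to $0\oplus v$ with $v\in\mathcal M_2$, I get $Bv\in\mathcal M_1$ and $Cv\in\mathcal M_2$.
\item From $A^*$-invariance, for $v\in\mathcal M_2$ I get $C^*v\in\mathcal M_2$, and for $u\in\mathcal M_1$ I get $B^*u\in\mathcal M_2$.
\end{itemize}
So $\mathcal M_2$ reduces $C$, and irreducibility of $C$ gives $\mathcal M_2=0$ or $\mathcal M_2=\C^d$.
\begin{itemize}
\item If $\mathcal M_2=0$, then $B^*\mathcal M_1=0$. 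Since $B$ has full row rank, $B^*$ is injective, so $\mathcal M_1=0$.
\item If $\mathcal M_2=\C^d$, I apply the same argument to $\mathcal M^\perp=\mathcal M_1^\perp\oplus0$, which is also reducing; it gives $\mathcal M_1^\perp=0$, so $\mathcal M$ is everything.
\end{itemize}
Hence $A$ is irreducible.

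The main obstacle is the splitting $\mathcal M=\mathcal M_1\oplus\mathcal M_2$. It relies on $A^*A$ being the projection onto $0\oplus\C^d$, which in turn needs the partial-isometry relation~\eqref{eq:defect-relation}. Without that relation the statement needs separate care, since $\ker A$ could be larger than $\C^m\oplus0$. I would therefore state the result in the canonical-form setting of Proposition~\ref{prop:canonical-form}, and check that the necessity construction $(B\mathcal N)\oplus\mathcal N$ is closed under $A^*$ via~\eqref{eq:defect-relation}.
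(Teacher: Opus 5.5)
The paper does not prove this proposition; it quotes it from \cite{GWW}, so there is no internal proof to compare against. Your argument is a correct, self-contained proof in the canonical-form setting, and the individual checks all go through:
\begin{itemize}
\item The identity $A^*A=\diag(0,\Id_d)$ forces every reducing subspace to split as $\mathcal M_1\oplus\mathcal M_2$.
\item The identity $B^*Bw=w-C^*Cw$ makes $(B\mathcal N)\oplus\mathcal N$ reducing whenever $\mathcal N$ reduces $C$.
\item In the sufficiency direction, injectivity of $B^*$ kills the case $\mathcal M_2=0$, and passing to $\mathcal M^\perp=\mathcal M_1^\perp\oplus 0$ handles the case $\mathcal M_2=\C^d$.
\end{itemize}

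One remark sharpens your closing caveat. Without \eqref{eq:defect-relation} the statement does not merely need separate care; it is false in both directions. It should therefore be read with the defect relation as an implicit hypothesis, as its placement right after Proposition~\ref{prop:canonical-form} suggests.
\begin{itemize}
\item Take $B=\begin{bmatrix}1&1\end{bmatrix}$ and $C=\diag(1,0)$. The only eigenvectors of the $3\times 3$ matrix $A$ are multiples of $e_1$ and of $e_1+e_2$. Since $A^*e_1\neq 0$ and $A^*(e_1+e_2)\neq e_1+e_2$, $A$ and $A^*$ have no common eigenvector. Hence $A$ is irreducible although $C$ is reducible.
\item Take $B=\begin{bmatrix}0&1\end{bmatrix}$ and $C=\begin{bmatrix}0&1\\0&0\end{bmatrix}$. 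Here $C$ is irreducible and $B$ has full row rank. Yet $A=(e_1+e_2)e_3^T$ has rank one, and $e_1-e_2\in\ker A\cap\ker A^*$, so $A$ is reducible.
\end{itemize}
So your decision to prove the result only for partial isometries is exactly right. You are also right that the splitting $\mathcal M=\mathcal M_1\oplus\mathcal M_2$ is where the hypothesis enters. Your necessity construction is the only other place it is used.
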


We note that $||C|| \leq 1$. For brevity, throughout the paper an operator satisfying that inequality is called \textit{a contraction}.
\section{Partial isometry towers and defect-pencil Circularity}\label{sec:towers}
Is there a genuinely weaker condition than rotational invariance that can be passed back and forth from a contraction $C$ to its associated partial isometry $\mathcal A(C)$? In this section, we answer this question positively. That in turn yields a class of counterexamples to the original Gau-Wang-Wu conjecture on nilpotent partial isometries.

The main idea is to iterate the passage from a contraction $C$ to its associated partial isometry $\mathcal A(C)$. This produces a new circularity notion which sits naturally between the ordinary Circularity property of $C$, equivalently isospectrality of $H_C(\theta)$, and the stronger rotational invariance of $C$ itself.

\begin{defi} \label{defi:const_alpha}
Let $C\in M_n(\C)$ be a contraction and set
$$
D_C:=\Id-C^*C\ge 0.
$$
Define a map $$B_C:\C^n\to \im D_C,$$ such that $B_C$ acts on $\im D_C \subseteq \C^n$ as the square root $D_C^{1/2}$ and sends $(\im D_C)^\perp$ to zero. The associated partial isometry of $C$ is
$$
\mathcal A(C):=\begin{bmatrix}0&B_C\\0&C\end{bmatrix}\in M_{n+\rk B_C}(\C).
$$
The partial isometry tower of $C$ is the sequence
$$
T_0:=C,
\qquad
T_{j+1}:=\mathcal A(T_j),\quad j\ge 0.
$$
\end{defi}
\begin{rem}
Here we need to mention that this definition amounts to a choice of coordinates. Namely, one may replace $B_C$ with some other matrix $B_C'$ satisfying $B_C'^*B_C' + C^*C = I$, obtaining another partial isometry. But provided $B_C'$ has the same row rank as $B_C$, this new partial isometry will be unitarily similar to $\mathcal{A}(C)$. 
\end{rem}

Moving from $C$ to $\mathcal{A}(C)$ does not add any additional information on $C$ from the point of view of unitary equivalence, as shown by the following:
\begin{prop}
    $\mathcal{A}(C)$ is unitarily similar to $\mathcal{A}(C')$ if and only if $C$ and $C'$ are unitarily similar.
\end{prop}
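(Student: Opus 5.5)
The plan is to identify $C$ intrinsically inside $\mathcal A(C)$: it is, up to unitary similarity, the compression of $\mathcal A(C)$ to $(\ker\mathcal A(C))^\perp$. Once this is established, both implications follow formally.

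First I will show that $\ker\mathcal A(C)$ is exactly the first summand $\C^{m}\oplus 0$, where $m=\rk B_C$. Write $x=(u,v)$ with $u\in\C^m$ and $v\in\C^n$. Then $\mathcal A(C)x=(B_Cv,\,Cv)$. By the defining relation $B_C^*B_C+C^*C=\Id_n$ from Definition~\ref{defi:const_alpha}, which is the relation \eqref{eq:defect-relation}, we get
$$
\norm{\mathcal A(C)x}^2=\scal{(B_C^*B_C+C^*C)v,v}=\norm{v}^2 .
$$
Hence $\mathcal A(C)x=0$ if and only if $v=0$. So $\ker\mathcal A(C)=\C^m\oplus 0$ and $(\ker\mathcal A(C))^\perp=0\oplus\C^n$. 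Consequently the compression $P\mathcal A(C)P|_{\operatorname{ran}P}$, where $P$ is the orthogonal projection onto $(\ker\mathcal A(C))^\perp$, equals $C$ in these coordinates.

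For the ``if'' direction, suppose $C'=UCU^*$ with $U$ unitary. Then $D_{C'}=UD_CU^*$, so $D_{C'}^{1/2}=UD_C^{1/2}U^*$ and $\im D_{C'}=U(\im D_C)$. In particular $\rk B_{C'}=\rk B_C$, and $B_{C'}=VB_CU^*$, where $V$ is the unitary induced by $U$ between the target spaces $\im D_C$ and $\im D_{C'}$, each identified with $\C^m$. A direct block computation then gives
$$
\diag(V,U)\,\mathcal A(C)\,\diag(V,U)^*=\mathcal A(C').
$$
Alternatively, one can note that $\begin{bmatrix}0&B_CU^*\\0&C'\end{bmatrix}$ is unitarily similar to $\mathcal A(C)$ via $\diag(\Id,U)$, and then invoke the Remark after Definition~\ref{defi:const_alpha}, since $B_CU^*$ satisfies the defect relation for $C'$ and has the same row rank.

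For the ``only if'' direction, suppose $W\mathcal A(C)W^*=\mathcal A(C')$ with $W$ unitary. Then $W$ maps $\ker\mathcal A(C)$ onto $\ker\mathcal A(C')$, and therefore maps the orthogonal complements onto each other. Thus $WP=P'W$, where $P'$ is the projection onto $(\ker\mathcal A(C'))^\perp$. The compressions satisfy $W(P\mathcal A(C)P)W^*=P'\mathcal A(C')P'$, and restricting $W$ to $\operatorname{ran}P$ gives a unitary $\operatorname{ran}P\to\operatorname{ran}P'$. By the first step these compressions are $C$ and $C'$, so $C$ and $C'$ are unitarily similar; equality of sizes is automatic, since $\dim\operatorname{ran}P=\dim\operatorname{ran}P'$.

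The only step needing care is the kernel identification, specifically that $\ker\mathcal A(C)$ is exactly the first block even when $C$ itself is singular. This rests entirely on the column $\begin{bmatrix}B_C\\ C\end{bmatrix}$ being an isometry, which is guaranteed by the defect relation.
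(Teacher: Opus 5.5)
Your proof is correct and follows the same route as the paper. For the ``if'' direction you use the block unitary $V\oplus U$ (the paper's $U_1\oplus U$), and for the ``only if'' direction you recognize $C$ as the compression of $\mathcal{A}(C)$ to $(\ker\mathcal{A}(C))^\perp$. You also spell out two points the paper leaves implicit: that $\ker\mathcal{A}(C)$ is exactly the first summand, because $B_C^*B_C+C^*C=\Id$ makes $v\mapsto(B_Cv,Cv)$ isometric, and the intertwining $WP=P'W$.
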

\begin{proof}
    
Assume $C'=UCU^*$ for some fixed unitary $U$. Then one can find a block unitary 
$$W:=U_{1}\oplus U$$
such that
\[
W\mathcal A(C)W^*
=
\begin{bmatrix}
0&U_{1}B_CU^*\\
0&UCU^*
\end{bmatrix}
=
\begin{bmatrix}
0&B_{C'}\\
0&C'
\end{bmatrix}
=\mathcal A(C').
\]

Now note that $C$ is a matrix of an operator $\mathcal{C} = \Pi \mathcal{A}(C)|_{(\ker \mathcal{A}(C))^\perp} $, where $\Pi$ is the orthogonal projector onto $(\ker \mathcal{A}(C))^\perp$. Therefore, whenever $\mathcal{A}(C)$ is unitarily similar to $\mathcal{A}(C')$, the same holds for $C$ and $C'$.

\end{proof}
The following Proposition shows what happens if one iterates the construction:

\begin{prop}\label{prop:tower-shape}
Let $C, B_C$, $D_C$, $T_j$ for $j \geq 0$ be as in Definition~\ref{defi:const_alpha}, and denote $d = \rk D_C$. Then in suitable orthonormal coordinates,
    $$
    T_j=\begin{bmatrix}
    0&\Id_d&0&\cdots&0&0\\
    0&0&\Id_d&\cdots&0&0\\
    \vdots& \vdots&\ddots&\ddots&\vdots&\vdots\\
    0&0&\cdots&0&\Id_d&0\\
    0&0&\cdots&0&0&B_C\\
    0&0&\cdots&0&0&C
    \end{bmatrix},
    $$
    where the first $j-1$ block rows and columns are $d\times d$.

\end{prop}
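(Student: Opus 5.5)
Induction on $j$. For the base case $j=1$ the display should be read as having no identity blocks, so it reduces to $\mathcal A(C)=\begin{bmatrix}0&B_C\\0&C\end{bmatrix}$, which is just Definition~\ref{defi:const_alpha}. Here $B_C$ is regarded as a $d\times n$ matrix $\C^n\to\im D_C\cong\C^d$; this works because $B_C^*B_C=D_C$ and $\rk B_C=\rk D_C=d$. (For $j=0$ the statement is the trivial $T_0=C$.) The real content is the inductive step: computing $\mathcal A(T)$ when $T$ is a partial isometry of the displayed shape.

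Assume $T_j$ has the displayed form. It is a partial isometry, being of the form in Proposition~\ref{prop:canonical-form}: its first block column is zero and the remaining columns are isometric. So
$$
D_{T_j}=\Id-T_j^*T_j
$$
is the orthogonal projection onto $\ker T_j$. The next step is to identify $\ker T_j$ directly from the block form. The first block column is zero. The middle columns $\Id_d$ are orthonormal and have disjoint supports. The last block column $\begin{bmatrix}B_C\\C\end{bmatrix}$ is an isometry by \eqref{eq:defect-relation}, and its range lies in the last two block rows, orthogonal to the ranges of the identity blocks. Hence $T_j^*T_j=0\oplus\Id\oplus\cdots\oplus\Id$, and $\ker T_j$ is exactly the first block coordinate, of dimension $d$. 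This dimension count is the point to check carefully: at $j=1$ the kernel is the $\C^d$ summand, since $\begin{bmatrix}B_C\\C\end{bmatrix}$ is injective. In all cases $\rk D_{T_j}=d$, and $D_{T_j}^{1/2}=D_{T_j}$ is the coordinate projection onto the first block.

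Consequently $B_{T_j}$, written as a map from $\C^{N}$ onto $\im D_{T_j}\cong\C^d$ in the obvious basis, is the row $[\Id_d\ 0\ \cdots\ 0]$. Therefore
$$
T_{j+1}=\mathcal A(T_j)=\begin{bmatrix}0&[\Id_d\ 0\cdots 0]\\0&T_j\end{bmatrix},
$$
which is precisely the displayed form with one more $\Id_d$ superdiagonal block. This completes the induction.

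No step is a serious obstacle. The only care needed is bookkeeping: fixing the identification $\im D\cong\C^d$ consistently (by the Remark following Definition~\ref{defi:const_alpha}, any choice with the same rank gives a unitarily similar result, which justifies saying "in suitable orthonormal coordinates"), and getting the indexing of the $j-1$ identity blocks right at the base case.
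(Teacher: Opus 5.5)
Your proposal is correct and follows essentially the paper's argument. Both induct on $j$, read off $T_j^*T_j=\diag(0_d,\Id)$ from the block form using $B_C^*B_C+C^*C=\Id$, conclude $B_{T_j}=[\Id_d\ 0\ \cdots\ 0]$, and observe that $\mathcal A(T_j)$ gains one more identity block. The only cosmetic difference is that you use a single inductive step starting at $j=1$, whereas the paper writes out $T_2$ explicitly before running the induction for $j\ge 2$.
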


\begin{proof}
By definition,
$$
T_1=\begin{bmatrix}0&B_C\\0&C\end{bmatrix},
\qquad
T_1^*T_1=\begin{bmatrix}0&0\\0&B_C^*B_C+C^*C\end{bmatrix}=\diag(0_d,I_n).
$$
Thus, $B_{T_1} = \begin{bmatrix}
    I_d & 0
\end{bmatrix}$, and one obtains
$$T_2 = \mathcal{A}(T_1)=\begin{bmatrix}
    0 & I_d & 0 \\
    0 & 0  & B_C \\
    0 & 0 & C
\end{bmatrix}.$$

Now assume that for some $j\ge 2$ we have, in suitable coordinates,
$$
T_j=\begin{bmatrix}
0&\Id_d&0&\cdots&0&0\\
0&0&\Id_d&\cdots&0&0\\
\vdots&&\ddots&\ddots&\vdots&\vdots\\
0&0&\cdots&0&\Id_d&0\\
0&0&\cdots&0&0&B_C\\
0&0&\cdots&0&0&C
\end{bmatrix},
$$
where the first $j-1$ block rows and columns are $d\times d$.

Therefore
$$
T_j^*T_j=\diag(0_d,\Id_{(j-1)d+n}),
\qquad
\Id-T_j^*T_j=\diag(\Id_d,0_{(j-1)d+n}),
$$
Hence once again
$$
B_{T_j}=\begin{bmatrix}\Id_d&0&\cdots&0\end{bmatrix},
$$
and we see that
$$
T_{j+1}=\mathcal A(T_j)=\begin{bmatrix}0&B_{T_j}\\0&T_j\end{bmatrix}
$$
has exactly the same block pattern with one additional $I_d$ block. 
\end{proof}
\begin{rem}\label{rem:Schaffer}
Proposition~\ref{prop:tower-shape} is closely related to the classical Schäffer
isometric dilation of a contraction, see \cite{Schaffer}. Let $E=\overline{\im \Delta_C}$, where
$\Delta_C=(I-C^*C)^{1/2}$, and let
\[
V_C(h,\xi_1,\xi_2,\ldots)=(Ch,\Delta_Ch,\xi_1,\xi_2,\ldots),
\qquad
h\in\C^n,\ \xi_j\in E,
\]
be the Schäffer isometry on $\C^n\oplus \ell^2(\mathbb N,E)$.
If one reverses the order of the block summands in the matrix from
Proposition~\ref{prop:tower-shape}, then $T_j$ becomes the compression of $V_C$
to the finite-dimensional subspace
\[
\C^n\oplus E\oplus\cdots\oplus E
\qquad (j \text{ copies of } E).
\]
Thus the partial-isometry tower may be viewed as a sequence of finite-dimensional truncations
of the Schäffer dilation. In other words, after this reordering, the adjoints of
these truncations form an inductive system whose direct limit is $V_C^*$.
\end{rem}

Moreover, the Kippenhahn polynomial of each $T_j$ is determined by the base matrix, $T_0$, as shown by the following Proposition:

\begin{prop}\label{prop:tower-det}
As before, let $D=\Id-C^*C$ and let $d=\rk D$. Define polynomials $\phi_{-1}=0$, $\phi_0=1$ and
\eq{eq:phi_rec}
\phi_{m+1}(z,r)=z\phi_m(z,r)-r^2\phi_{m-1}(z,r).
\en
Then for every $j\ge 1$,
\eq{eq:tower-det}
\det\bigl(z\Id-2rH_{T_j}(\theta)\bigr)
=
\phi_j(z,r)^d\det\!\left(z\Id-2rH_C(\theta)-r^2\frac{\phi_{j-1}(z,r)}{\phi_j(z,r)}D\right).
\en
\end{prop}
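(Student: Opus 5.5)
Our plan is to use the explicit block form of $T_j$ from Proposition~\ref{prop:tower-shape} and reduce the determinant by successive Schur complements, eliminating the $d\times d$ blocks one at a time starting from the top-left corner. Write $2rH_{T_j}(\theta)=r(e^{-i\theta}T_j+e^{i\theta}T_j^*)$. In the coordinates of Proposition~\ref{prop:tower-shape}, this is a block tridiagonal Hermitian matrix. Its first $j$ diagonal blocks (of size $d$) are zero and its last diagonal block is $2rH_C(\theta)$. The off-diagonal blocks are $re^{-i\theta}\Id_d$ and $re^{i\theta}\Id_d$ between consecutive $d$-blocks, and $re^{-i\theta}B_C$ and $re^{i\theta}B_C^*$ between the last $d$-block and the $C$-block. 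So $z\Id-2rH_{T_j}(\theta)$ has diagonal blocks $z\Id_d$ (repeated $j$ times) followed by $z\Id-2rH_C(\theta)$, and the off-diagonal blocks are the negatives of those above.

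We first remove the phases: conjugating by the diagonal unitary $\diag(\Id_d, e^{i\theta}\Id_d,\dots,e^{i(j-1)\theta}\Id_d, e^{ij\theta}\Id_n)$ (with suitably chosen signs of exponents) makes all off-diagonal scalar blocks equal to $-r\Id_d$, $-rB_C$ and $-rB_C^*$. It leaves $H_C(\theta)$ unchanged, since that block is conjugated by a scalar unitary. The determinant is unchanged.

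Next we run block Gaussian elimination from the top. We claim that after eliminating the first $k$ blocks, the $(k+1)$-st diagonal block becomes $\frac{\phi_{k+1}}{\phi_k}\Id_d$ and the accumulated determinant factor is $\phi_k^d$. Everything stays scalar in the first $j$ blocks, so this is a scalar continued-fraction computation. Indeed, the Schur complement step sends $a_k\mapsto z-r^2/a_k$ with $a_1=z=\phi_1/\phi_0$, and then
$$
z-r^2\frac{\phi_{k-1}}{\phi_k}=\frac{\phi_{k+1}}{\phi_k}
$$
by \eqref{eq:phi_rec}. The product of the pivots telescopes to $(\phi_j/\phi_0)^d=\phi_j^d$.

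Finally, eliminating the $j$-th $d$-block into the $C$-block gives the Schur complement
$$
z\Id-2rH_C(\theta)-r^2B_C^*\Big(\frac{\phi_j}{\phi_{j-1}}\Big)^{-1}B_C=z\Id-2rH_C(\theta)-r^2\frac{\phi_{j-1}}{\phi_j}D,
$$
using $B_C^*B_C=D$. Here $B_C$ is viewed as a $d\times n$ matrix whose rows span $\im D$, which is precisely the definition of $B_C$. The case $j=1$ is a single Schur complement step directly.

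The main care point is that this identity holds as rational functions: the pivots $\phi_k$ are nonzero polynomials (monic in $z$ of degree $k$), so the elimination is valid for generic $z$, and \eqref{eq:tower-det} then extends because both sides are polynomials (or we interpret the right side as a rational identity).
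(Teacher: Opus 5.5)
Your proof is correct and follows essentially the same route as the paper. Both use the phase-removing diagonal unitary $\diag(\Id_d,e^{i\theta}\Id_d,\dots,e^{ij\theta}\Id_n)$, a Schur complement onto the $C$-block using $B_C^*B_C=D$, and extension from generic $z$ by polynomiality. The only difference is cosmetic: the paper takes a single Schur complement against the whole block $J_j\otimes\Id_d$ and reads off $(J_j^{-1})_{jj}=\phi_{j-1}/\phi_j$ from the cofactor formula, whereas you obtain the same final pivot as a continued fraction by successive block elimination.
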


\begin{proof}
Fix $j\ge1$. By Proposition~\ref{prop:tower-shape}, in suitable coordinates $T_j$ has the stated block form. Set
\[
K_j(\theta):=z\Id-2rH_{T_j}(\theta)=z\Id-r e^{-i\theta}T_j-r e^{i\theta}T_j^*.
\]
 Let
\[
U_j(\theta):=\diag\bigl(\Id_d,e^{i\theta}\Id_d,e^{2i\theta}\Id_d,\dots,e^{ij\theta}\Id_n\bigr).
\]
 Since the $(k,k+1)$ block of $T_j$ is $\Id_d$ for $1\le k\le j-1$, the $(j,j+1)$ block is $B_C$, and the $(j+1,j+1)$ block is $C$, a direct block computation gives
\[
U_j(\theta)^*\bigl(r e^{-i\theta}T_j\bigr)U_j(\theta)=
\begin{bmatrix}
0&r\Id_d&0&\cdots&0&0\\
0&0&r\Id_d&\cdots&0&0\\
\vdots&&\ddots&\ddots&\vdots&\vdots\\
0&0&\cdots&0&r\Id_d&0\\
0&0&\cdots&0&0&rB_C\\
0&0&\cdots&0&0&r e^{-i\theta}C
\end{bmatrix},
\]
because for $1\le k\le j-1$,
\[
e^{-i(k-1)\theta}\,r e^{-i\theta}\,e^{ik\theta}=r,
\]
and similarly for the $B_C$-block,
\[
e^{-i(j-1)\theta}\,r e^{-i\theta}\,e^{ij\theta}=r.
\]
Taking adjoints gives
\[
U_j(\theta)^*\bigl(r e^{i\theta}T_j^*)U_j(\theta)=
\begin{bmatrix}
0&0&0&\cdots&0&0\\
r\Id_d&0&0&\cdots&0&0\\
0&r\Id_d&0&\ddots&\vdots&\vdots\\
\vdots&&\ddots&\ddots&0&0\\
0&0&\cdots&r\Id_d&0&0\\
0&0&\cdots&0&rB_C^*&r e^{i\theta}C^*
\end{bmatrix}.
\]
Therefore
\[
M_j(\theta):=U_j(\theta)^*K_j(\theta)U_j(\theta)
\]
is exactly
\[
M_j(\theta)=\begin{bmatrix}A_j&E\\E^*&S(\theta)\end{bmatrix},
\]
where
\[
A_j=J_j\otimes \Id_d,
\qquad
S(\theta)=z\Id_n-r e^{-i\theta}C-r e^{i\theta}C^*=z\Id_n-2rH_C(\theta),
\]
with
\[
J_j=\begin{bmatrix}
z&-r&0&\cdots&0\\
-r&z&-r&\cdots&0\\
0&-r&z&\ddots&\vdots\\
\vdots&&\ddots&\ddots&-r\\
0&\cdots&0&-r&z
\end{bmatrix},
\]
and
\[
E=
\begin{bmatrix}
0\\
\vdots\\
0\\
-rB_C
\end{bmatrix}
=(e_j\otimes(-r\Id_d))B_C,
\]
a $jd\times n$ block column whose only nonzero $d\times n$ block is the last one.

The determinant of $J_j$ satisfies the recurrence \eqref{eq:phi_rec}, so
\[
\det J_j=\phi_j(z,r),
\qquad
\det A_j=(\det J_j)^d=\phi_j(z,r)^d.
\]
Assume first that $\phi_j(z,r)\neq0$, equivalently $A_j$ is invertible. Then the Schur complement gives
\[
\det M_j(\theta)=\det A_j\,\det\bigl(S(\theta)-E^*A_j^{-1}E\bigr).
\]
Now
\[
A_j^{-1}=J_j^{-1}\otimes \Id_d,
\]
so only the $(j,j)$ entry of $J_j^{-1}$ contributes when we sandwich $A_j^{-1}$ between $E^*$ and $E$. Indeed, writing $e_j=(0,\dots,0,1)^T\in\C^j$,
\[
E=(e_j\otimes(-r\Id_d))B_C.
\]
Hence
\begin{align*}
E^*A_j^{-1}E
&=B_C^*(e_j^*\otimes(-r\Id_d))(J_j^{-1}\otimes \Id_d)(e_j\otimes(-r\Id_d))B_C
\\
&\qquad\qquad\ =r^2(e_j^*J_j^{-1}e_j)B_C^*B_C\
=r^2(J_j^{-1})_{jj}B_C^*B_C.
\end{align*}
By the cofactor formula,
\[
(J_j^{-1})_{jj}=\frac{\det J_{j-1}}{\det J_j}=\frac{\phi_{j-1}(z,r)}{\phi_j(z,r)}.
\]
Since $B_C^*B_C=D$, we obtain
\[
E^*A_j^{-1}E=r^2\frac{\phi_{j-1}(z,r)}{\phi_j(z,r)}D.
\]
Substituting into the Schur complement identity yields \eqref{eq:tower-det} whenever $\phi_j(z,r)\neq0$. Finally, both sides of \eqref{eq:tower-det} are polynomials in $z$, so the identity holds for all $z$.
\end{proof}

Writing $z=2ru$ and using the Chebyshev polynomials of the second kind, one gets
\begin{equation}\label{eq:cheb-form}
\det\bigl(z\Id-2rH_{T_j}(\theta)\bigr)
=(2r)^n r^{jd}U_j(u)^d\det\bigl(u\Id-H_C(\theta)-\tau_j(u)D\bigr),
\end{equation}
where
$$
\tau_j(u):=\frac{U_{j-1}(u)}{2U_j(u)}.
$$
This motivates the central notion.

\begin{defi}\label{def:defect-pencil}
A contraction $C$ has \emph{defect-pencil Circularity} if for every real $\tau$ the spectrum of
$$
H_C(\theta)+\tau(\Id-C^*C)
$$
is independent of $\theta$.
\end{defi}

\begin{thm}\label{thm:tower-equiv}
For any contraction $C$, the following are equivalent:
\begin{enumerate}[label=\textup{(\roman*)}]
    \item every level of the partial isometry tower of $C$ has Circularity property;
    \item $C$ has defect-pencil Circularity.
\end{enumerate}
\end{thm}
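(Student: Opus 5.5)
The plan is to use the determinant identity \eqref{eq:cheb-form} from Proposition~\ref{prop:tower-det} as a bridge between the two conditions. Circularity of $T_j$ means that for each fixed $u$ the polynomial $\det(u\Id-H_{T_j}(\theta))$ does not depend on $\theta$. Taking $r=1/2$ in \eqref{eq:cheb-form}, this amounts to $\theta$-independence of
$$
P_j(u,\theta):=U_j(u)^d\det\bigl(u\Id-H_C(\theta)-\tau_j(u)D\bigr).
$$
For $j=0$ the level $T_0=C$ is handled directly, since Circularity of $C$ is the $\tau=0$ case of the defect pencil.

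\emph{(ii)$\Rightarrow$(i).} Defect-pencil Circularity says that $Q(u,\tau,\theta):=\det(u\Id-H_C(\theta)-\tau D)$ is independent of $\theta$ for every real $\tau$ and every $u$, because the characteristic polynomial is determined by the spectrum. Fix $j\ge1$ and a real $u$ with $U_j(u)\neq0$. Then $\tau_j(u)$ is real, so $P_j(u,\theta)$ is independent of $\theta$. Both sides are polynomials in $u$ and the set of such $u$ is cofinite, so the independence holds identically. Hence every $T_j$ has Circularity property.

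\emph{(i)$\Rightarrow$(ii).} Here we know that $Q(u,\tau_j(u),\theta)$ is independent of $\theta$ for all $j$ and all $u$ off the zeros of $U_j$. The strategy is to show that the points $(u,\tau_j(u))$ are Zariski dense in $\R^2$. Expand $Q(u,\tau,\theta)=\sum_{k} c_k(u,\theta)\tau^k$, where $c_k$ is a polynomial in $u$ of degree at most $n-k$. Fix $u_0\in(-1,1)$ with $u_0=\cos\alpha$. Then $\tau_j(u_0)=\sin(j\alpha)/(2\sin((j+1)\alpha))$, using $U_j(\cos\alpha)=\sin((j+1)\alpha)/\sin\alpha$. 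We need infinitely many distinct values of $\tau_j(u_0)$ as $j$ varies. If $\alpha/\pi$ is irrational, the map $j\mapsto\tau_j$ is the orbit of the Möbius recursion $\tau_{j+1}=1/(4(u_0-\tau_j))$, which is an elliptic transformation conjugate to an irrational rotation. Its orbit is therefore infinite, and in fact dense in $\R\cup\{\infty\}$.

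For such $u_0$, the polynomial $\tau\mapsto Q(u_0,\tau,\theta)-Q(u_0,\tau,\theta')$ has degree at most $n$ and infinitely many zeros, so it vanishes identically. The $u_0$ with $\alpha/\pi$ irrational are dense in $(-1,1)$. Since each coefficient $c_k(u,\theta)-c_k(u,\theta')$ is a polynomial in $u$ vanishing on this dense set, it vanishes identically. Thus $Q(u,\tau,\theta)$ is independent of $\theta$ for all $u$ and $\tau$, which is (ii).

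The main obstacle is this density step. One must verify that the $\tau_j(u_0)$ are pairwise distinct, which uses the irrational-rotation argument above. One also has to avoid the finitely many $u_0$ where some $U_j$ vanishes, but these form a countable set, so a dense set of admissible $u_0$ remains. As an alternative for extra robustness, one can instead compute $\lim_{j\to\infty}\tau_j(u)$ for $|u|>1$, which is a continuous function in $u$; combined with $j$ varying, this also yields enough points. The rotation argument is the cleaner route.
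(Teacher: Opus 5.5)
Your proof is correct. The direction (ii)$\Rightarrow$(i) is the same as the paper's; you are slightly more careful about $T_0$ and about the zeros of $U_j$.

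For (i)$\Rightarrow$(ii) you use different interpolation nodes. The paper fixes $|u|>1$ and writes $u=\pm\cosh t$, so that $\tau_j(u)=\pm\frac12\sinh(jt)/\sinh((j+1)t)$. The identity $\sinh^2((j+1)t)-\sinh(jt)\sinh((j+2)t)=\sinh^2 t$ then makes these nodes strictly monotone in $j$. Consequently, for \emph{every} $|u|>1$ all $U_j(u)\neq0$ and the nodes are distinct. The extension from $|u|>1$ to all $u$ is a one-line polynomial argument.

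You work instead inside $(-1,1)$ with $u_0=\cos\alpha$ and $\alpha/\pi\notin\mathbb{Q}$. You get distinctness because the elliptic M\"obius recursion $\tau_{j+1}=1/(4(u_0-\tau_j))$ is conjugate to an irrational rotation. This is valid. Such $u_0$ automatically avoid every zero of every $U_j$, so your remark about removing ``finitely many'' $u_0$ (in fact countably many) is unnecessary.

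The cost of your route is twofold: only a dense set of $u_0$ is usable, and you invoke dynamics where a direct check suffices. From $\sin((j+1)\alpha)=\sin(j\alpha)\cos\alpha+\cos(j\alpha)\sin\alpha$ one gets $\tau_j(\cos\alpha)=1/\bigl(2(\cos\alpha+\sin\alpha\cot(j\alpha))\bigr)$. So the nodes are distinct exactly when the $j\alpha$ are distinct mod $\pi$, which is immediate for irrational $\alpha/\pi$.

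The paper's hyperbolic regime is more economical and gives an explicit separation of the nodes. Both routes equally yield Corollary~\ref{cor:finitely-many}, since at any admissible $u$ any $d+1$ of the nodes are distinct.
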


\begin{proof}
Assume \textup{(ii)}. Then the determinant on the right-hand side of \eqref{eq:cheb-form} is independent of $\theta$ for every $j$ and $u$, so each
$$
\det\bigl(z\Id-2rH_{T_j}(\theta)\bigr)
$$
is independent of $\theta$. Hence every $T_j$ has Circularity property.

Assume conversely that every $T_j$ has Circularity property. Fix $u\in\R$ with $|u|>1$. Then $U_j(u)\neq0$ for all $j$, and by \eqref{eq:cheb-form} the polynomial
$$
F_{\theta}^{(u)}(\tau):=\det\bigl(u\Id-H_C(\theta)-\tau D\bigr)
$$
does not depend on $\theta$ at all points $\tau=\tau_j(u)$. Since $\deg_\tau F_{\theta}^{(u)}\le d$, to conclude that the same holds for every $\eta$ it suffices to know that $\tau_j(u)$ are pairwise distinct.

If $u=\cosh t>1$, then
$$
U_j(u)=\frac{\sinh((j+1)t)}{\sinh t},
\qquad
\tau_j(u)=\frac12\frac{\sinh(jt)}{\sinh((j+1)t)}.
$$
If $u=-\cosh t<-1$, then
$$
U_j(u)=(-1)^j\frac{\sinh((j+1)t)}{\sinh t},
\qquad
\tau_j(u)=-\frac12\frac{\sinh(jt)}{\sinh((j+1)t)}.
$$
In either case the numbers $\tau_j(u)$ are pairwise distinct, because
$$
\frac{\sinh((j+1)t)}{\sinh((j+2)t)}-\frac{\sinh(jt)}{\sinh((j+1)t)}
=\frac{\sinh^2 t}{\sinh((j+1)t)\sinh((j+2)t)}>0.
$$
Therefore $F_{\theta}^{(u)}$ agrees with $F_0^{(u)}$ at infinitely many distinct real points and hence is identical to it. This proves defect-pencil Circularity for every $|u|>1$.

Now fix $\tau\in\R$. For each $\theta$, the function
$$
\nu\mapsto \det\bigl(\nu\Id-H_C(\theta)-\tau D\bigr)
$$
is a polynomial in $\nu$ of degree $n$. It agrees with the corresponding polynomial at $\theta=0$ for all $|\nu|>1$, hence for all $\nu\in\R$. Thus defect-pencil Circularity holds for every real $\tau$.
\end{proof}

Since the defect rank equals the degree of the determinant in $\tau$, only finitely many levels matter.

\begin{cor}\label{cor:finitely-many}
Let $d=\rk(\Id-C^*C)$. Then the following are equivalent:
\begin{enumerate}[label=\textup{(\roman*)}]
    \item every level of the partial isometry tower of $C$ is circular;
    \item $T_1,T_2,\dots,T_{d+1}$ have Circularity property.
\end{enumerate}
If one prefers to include the base matrix, then $T_0,T_1,\dots,T_d$ also suffice.
\end{cor}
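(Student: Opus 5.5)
(i)$\Rightarrow$(ii) is trivial in both forms of the corollary, so all the work is in showing that finitely many levels already force defect-pencil Circularity; Theorem~\ref{thm:tower-equiv} then gives (i). The tool is the Chebyshev form \eqref{eq:cheb-form} of Proposition~\ref{prop:tower-det}, together with the observation that
$$
F_\theta^{(u)}(\tau)=\det\bigl(u\Id-H_C(\theta)-\tau D\bigr)
$$
has degree at most $d=\rk D$ in $\tau$. Expanding the determinant in an orthonormal eigenbasis of $D$ makes this visible: only the $d$ rows meeting $\im D$ involve $\tau$.

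For the version with $T_1,\dots,T_{d+1}$, fix $u$ with $|u|>1$. Then $U_j(u)\ne0$, and by \eqref{eq:cheb-form} Circularity of $T_j$ says that $F^{(u)}_\theta(\tau_j(u))$ does not depend on $\theta$. In the proof of Theorem~\ref{thm:tower-equiv} the values $\tau_j(u)$ are strictly monotone in $j$, so $\tau_1(u),\dots,\tau_{d+1}(u)$ are $d+1$ distinct points. Two polynomials of degree $\le d$ that agree at $d+1$ points are equal, hence $F^{(u)}_\theta\equiv F^{(u)}_0$ as polynomials in $\tau$. The closing step of the proof of Theorem~\ref{thm:tower-equiv} then applies verbatim: for fixed $\tau$, agreement for all $|u|>1$ extends to all real $u$ by polynomiality in $u$. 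This gives defect-pencil Circularity.

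For the version with $T_0,\dots,T_d$, observe that $\tau_0(u)=U_{-1}(u)/(2U_0(u))=0$, and that $F^{(u)}_\theta(0)=\det(u\Id-H_C(\theta))$. So Circularity of $T_0=C$ is exactly $\theta$-independence of $F^{(u)}_\theta$ at $\tau=0$. It remains to check that $0,\tau_1(u),\dots,\tau_d(u)$ are distinct. For $j\ge1$ and $u=\pm\cosh t$ with $t>0$ we have $\tau_j(u)=\pm\tfrac12\sinh(jt)/\sinh((j+1)t)\ne0$. Together with the monotonicity above, this gives $d+1$ distinct interpolation nodes, and the argument of the previous paragraph finishes the proof.

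There is no real obstacle here; the only point needing care is the degree bound $\deg_\tau F\le d$ rather than $n$. I would justify it by writing $D=V\,\diag(\Lambda,0)\,V^*$ with $\Lambda$ invertible of size $d$ and conjugating by $V$. After that, $\tau$ appears only in $d$ diagonal entries, so multilinearity of the determinant in rows bounds the degree by $d$.
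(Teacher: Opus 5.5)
Your proof is correct and follows the paper's argument. You fix $|u|>1$, use \eqref{eq:cheb-form} to get $\theta$-independence of $F^{(u)}_\theta$ at the $d+1$ distinct nodes $\tau_j(u)$, interpolate using $\deg_\tau F^{(u)}_\theta\le d$, and conclude via Theorem~\ref{thm:tower-equiv}; you also soundly fill in details the paper leaves implicit (the degree bound, the extension from $|u|>1$ to all real $u$, and the $T_0,\dots,T_d$ variant via $\tau_0(u)=0$).
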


\begin{proof}
Only the implication (ii) $\Longrightarrow$ (i) needs proof. Fix $u$ with $|u|>1$. By Theorem~\ref{thm:tower-equiv}, it is enough to show that
$$
F_\theta^{(u)}(\tau)=\det\bigl(u\Id-H_C(\theta)-\tau D\bigr)
$$
is independent of $\theta$ for all $\tau$. The polynomial $F_\theta^{(u)}$ has degree at most $d$ in $\tau$, and the levels $T_1,\dots,T_{d+1}$ provide the $d+1$ distinct values $\tau_1(u),\dots,\tau_{d+1}(u)$, so interpolation determines the entire polynomial.
\end{proof}

There is also a trace-word reformulation.

\begin{defi}
Fix a contraction $C$ and set $D=\Id-C^*C$. For integers $k\ge 1$, $0\le m\le k$ and $0\le \ell\le k-m$, let $\mathcal W_{k,m,\ell}(C,C^*,D)$ be the set of all words of length $k$ in the alphabet $\{C,C^*,D\}$ having exactly $m$ letters $D$ and exactly $\ell$ letters $C^*$. Define
$$
\Omega_{k,m,\ell}(C):=\sum_{w\in \mathcal W_{k,m,\ell}(C,C^*,D)}\tr w(C, C^*, D).
$$
\end{defi}

\begin{thm}\label{thm:trace-word}
For a contraction $C\in M_n$, the following are equivalent:
\begin{enumerate}[label=\textup{(\roman*)}]
    \item every level of the partial isometry tower of $C$ has Circularity property;
    \item for every $1\le k\le n$, every $0\le m\le k$, and every $\ell$ with $2\ell\neq k-m$,
    $$
    \Omega_{k,m,\ell}(C)=0.
    $$
\end{enumerate}
\end{thm}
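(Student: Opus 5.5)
By Theorem~\ref{thm:tower-equiv}, condition (i) is equivalent to defect-pencil Circularity: for every real $\tau$, the spectrum of $H_C(\theta)+\tau D$ does not depend on $\theta$. Since this matrix is Hermitian of size $n$, its spectrum is determined by the power sums $\tr\bigl(H_C(\theta)+\tau D\bigr)^k$ for $1\le k\le n$, via Newton's identities. So (i) holds if and only if each function
$$
p_k(\theta,\tau):=\tr\bigl(H_C(\theta)+\tau D\bigr)^k,\qquad 1\le k\le n,
$$
is independent of $\theta$ for every real $\tau$.

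The next step is to expand these power sums. Write $H_C(\theta)=\tfrac12(e^{-i\theta}C+e^{i\theta}C^*)$ and expand $(\tfrac12e^{-i\theta}C+\tfrac12e^{i\theta}C^*+\tau D)^k$ as a sum over all words of length $k$ in the noncommuting letters $C,C^*,D$. A word with $m$ letters $D$ and $\ell$ letters $C^*$ has $k-m-\ell$ letters $C$. It therefore carries the coefficient
$$
2^{-(k-m)}\tau^m e^{i\theta(\ell-(k-m-\ell))}=2^{-(k-m)}\tau^m e^{i\theta(2\ell-k+m)}.
$$
Taking traces and grouping words by $(m,\ell)$ gives
$$
p_k(\theta,\tau)=\sum_{m=0}^k\sum_{\ell=0}^{k-m}2^{-(k-m)}\tau^m e^{i\theta(2\ell-k+m)}\,\Omega_{k,m,\ell}(C).
$$
This is a polynomial in $\tau$ whose coefficients are trigonometric polynomials in $\theta$. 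For fixed $k$ and $m$, distinct values of $\ell$ give distinct frequencies $2\ell-k+m$.

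Now $p_k$ is independent of $\theta$ for all real $\tau$ exactly when each $\tau^m$-coefficient is constant in $\theta$. This follows by identifying polynomial coefficients in $\tau$, since a polynomial vanishing on all of $\R$ is zero. By linear independence of the exponentials $e^{i\theta s}$, each such coefficient is constant exactly when every term with nonzero frequency vanishes, that is, $\Omega_{k,m,\ell}(C)=0$ whenever $2\ell\ne k-m$. Running over $1\le k\le n$ gives the equivalence (i)$\iff$(ii). The converse direction is immediate from the same expansion, because vanishing of all these $\Omega$'s makes each $p_k$ independent of $\theta$.

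The only delicate point is the reduction to $k\le n$. It requires that equality of the traces of the first $n$ powers of two $n\times n$ Hermitian (indeed any $n\times n$) matrices forces equality of their characteristic polynomials. This holds by Newton's identities over $\C$ and is standard. I expect no real obstacle beyond keeping the exponent bookkeeping $2\ell-k+m$ straight.
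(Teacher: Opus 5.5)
Your proposal is correct and follows essentially the same route as the paper: you reduce via Theorem~\ref{thm:tower-equiv} to defect-pencil Circularity, use Newton's identities to pass to $\tr\bigl(H_C(\theta)+\tau D\bigr)^k$ for $1\le k\le n$, and read off $2^{-(k-m)}\Omega_{k,m,\ell}(C)$ as the coefficient of $\tau^m e^{i(2\ell-k+m)\theta}$. You also state explicitly that the $\tau^m$-coefficients must be separated (a polynomial in $\tau$ vanishing on $\R$ is zero) before using independence of the exponentials, a step the paper leaves implicit.
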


\begin{proof}
By Theorem~\ref{thm:tower-equiv}, it suffices to characterize defect-pencil Circularity.

Fix $\tau\in\R$ and write
$$
M_\tau(\theta):=H_C(\theta)+\tau D
=\frac12\bigl(e^{-i\theta}C+e^{i\theta}C^*\bigr)+\tau D.
$$
The spectrum of $M_\tau(\theta)$ is independent of $\theta$ if and only if
$$
\tr(M_\tau(\theta)^k),\qquad 1\le k\le n,
$$
are independent of $\theta$, by Newton identities.

Now expand $M_\tau(\theta)^k$. A word with $m$ letters $D$, $\ell$ letters $C^*$, and $k-m-\ell$ letters $C$ contributes the scalar factor
$$
2^{-(k-m)}\tau^m e^{i(2\ell-(k-m))\theta}.
$$
Therefore the coefficient of $\tau^m e^{i(2\ell-(k-m))\theta}$ in $\tr(M_\tau(\theta)^k)$ equals $2^{-(k-m)}\Omega_{k,m,\ell}(C)$. Independence of $\theta$ is therefore equivalent to the vanishing of every nonzero Fourier coefficient, i.e.
$$
\Omega_{k,m,\ell}(C)=0\qquad\text{whenever }2\ell\neq k-m.
$$
This proves the equivalence.
\end{proof}

We compare defect-pencil Circularity with the following characterization (see e.g. \cite{PoonWoerd}).

\begin{prop}[{\cite[Theorems~1.1 and~3.1]{PoonWoerd}}]\label{prop:PW}
Let $B\in M_n$.
\begin{enumerate}[label=\textup{(\roman*)}]
    \item $B$ has Circularity property if and only if for every $1\le k\le n$ and every $0\le \ell<k/2$,
    $$
    \sum_{\substack{|w|=k\\ \#(B^*\text{ in }w)=\ell}} \tr w(B,B^*)=0.
    $$
    Equivalently, $H_B(\theta)$ is unitarily similar to $H_B(0)$ for all $\theta$.
    \item $B$ is rotationally invariant if and only if
    $$
    \tr w(B,B^*)=0
    $$
    for every word $w$ in $B,B^*$ with different numbers of $B$ and $B^*$.
\end{enumerate}
\end{prop}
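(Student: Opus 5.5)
Part (i) is a trace reformulation of Circularity. By Newton's identities, the spectrum of $H_B(\theta)$ is independent of $\theta$ if and only if the power sums $\tr(H_B(\theta)^k)$, $1\le k\le n$, are. This is the same reduction used in the proof of Theorem~\ref{thm:trace-word}. Since $H_B(\theta)$ is Hermitian, $\theta$-independence of its spectrum is the same as unitary similarity of $H_B(\theta)$ to $H_B(0)$, which gives the stated equivalent form.

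To get the trace condition, expand
$$
\tr\bigl(H_B(\theta)^k\bigr)=2^{-k}\sum_{|w|=k}e^{i(2\ell(w)-k)\theta}\tr w(B,B^*),
$$
where $\ell(w)$ is the number of letters $B^*$ in $w$. Denote by $\Sigma_{k,\ell}$ the sum of $\tr w(B,B^*)$ over words of length $k$ with exactly $\ell$ letters $B^*$. The coefficient of $e^{i(2\ell-k)\theta}$ is then $2^{-k}\Sigma_{k,\ell}$.

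A trigonometric polynomial is constant exactly when all its nonzero frequencies vanish, so Circularity is equivalent to $\Sigma_{k,\ell}=0$ for all $2\ell\ne k$. Conjugation swaps $B$ and $B^*$ in each word and reverses the word, which gives $\Sigma_{k,k-\ell}=\overline{\Sigma_{k,\ell}}$. Hence it suffices to require the vanishing for $\ell<k/2$. This is just the case $D=0$ of the computation in Theorem~\ref{thm:trace-word}.

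Part (ii) is a Specht-type statement, and the key tool is Specht's theorem: $X$ and $Y$ are unitarily similar if and only if $\tr w(X,X^*)=\tr w(Y,Y^*)$ for all words $w$. Apply it with $X=e^{i\theta}B$ and $Y=B$. For a word with $p$ letters $B$ and $q$ letters $B^*$ we have
$$
\tr w(e^{i\theta}B,e^{-i\theta}B^*)=e^{i(p-q)\theta}\tr w(B,B^*).
$$
Therefore rotational invariance, i.e.\ equality for all $\theta$, holds exactly when $\tr w=0$ whenever $p\ne q$.

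The only nontrivial input is Specht's theorem itself, which I would cite rather than prove; it is also the step to watch. For the converse direction, note that if the traces vanish whenever $p\ne q$, then all traces agree for every $\theta$, and Specht gives the unitary similarity. No other obstacle is expected, since the statement is attributed to \cite{PoonWoerd} and the plan is simply to reproduce it via these two expansions.
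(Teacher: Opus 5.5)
Your proof is correct. Part (i) is exactly the $D=0$ case of the Newton-identity and Fourier-coefficient argument the paper uses for Theorem~\ref{thm:trace-word}, and the conjugation symmetry $\Sigma_{k,k-\ell}=\overline{\Sigma_{k,\ell}}$ correctly justifies restricting to $\ell<k/2$. Part (ii) follows from Specht's theorem applied to $e^{i\theta}B$ and $B$ as you describe; the paper gives no proof of this proposition and only cites \cite{PoonWoerd}, so your derivation (relying only on Specht's theorem) is consistent with the paper's own methods.
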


Thus defect-pencil Circularity sits naturally between the two conditions:
$$
\text{rotational invariance}
\Longrightarrow
\text{defect-pencil Circularity}
\Longrightarrow
\text{Circularity property}.
$$
Indeed, if $U_\theta^*CU_\theta=e^{i\theta}C$, then also $U_\theta^*D\,U_\theta=D$, so
$$
U_\theta^*\bigl(H_C(0)+\tau D\bigr)U_\theta=H_C(\theta)+\tau D.
$$
On the level of trace identities, rotational invariance forces vanishing of every individual unbalanced trace, defect-pencil Circularity forces vanishing of the aggregated defect-word sums $\Omega_{k,m,\ell}$, and ordinary Circularity forces vanishing of the aggregated unbalanced sums in the alphabet $\{C,C^*\}$.

The same comparison has a determinantal side, see e.g. Theorem 1.1 in $\cite{PoonWoerd}$.

\begin{prop}\label{prop:circ_prop_det}
An operator $C$ has Circularity property if and only if
   $$\det\bigl(u\Id-H_C(\theta))$$
    is independent of $\theta$.
\end{prop}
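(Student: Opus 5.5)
The plan is to show that both conditions say the same thing: the eigenvalues of $H_C(\theta)$, counted with multiplicity, do not depend on $\theta$. We use that $H_C(\theta)$ is Hermitian for each real $\theta$.

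For the forward direction, assume $C$ has the Circularity property, i.e.\ the spectrum of $H_C(\theta)$ is independent of $\theta$, with multiplicities. Since $H_C(\theta)$ is Hermitian, it is diagonalizable with real eigenvalues. Hence
$$\det\bigl(u\Id-H_C(\theta)\bigr)=\prod_{j=1}^n\bigl(u-\lambda_j(\theta)\bigr).$$
The multiset $\{\lambda_j(\theta)\}$ is the same for all $\theta$, so this polynomial in $u$ does not depend on $\theta$.

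For the converse, assume the polynomial $\det\bigl(u\Id-H_C(\theta)\bigr)$ is independent of $\theta$ as a polynomial in $u$. Its roots, counted with multiplicity, are exactly the eigenvalues of $H_C(\theta)$ with their algebraic multiplicities. Therefore the spectrum of $H_C(\theta)$, as a multiset, does not depend on $\theta$, which is the Circularity property.

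If one also wants the unitary-similarity form in Proposition~\ref{prop:PW}(i), we invoke the spectral theorem. Two Hermitian matrices with the same eigenvalues and multiplicities are unitarily similar: diagonalize each by a unitary to the same diagonal matrix $\diag(\lambda_1,\dots,\lambda_n)$. Hence $H_C(\theta)$ is unitarily similar to $H_C(0)$ for all $\theta$. The only point needing care is that the definition of the Circularity property is read with multiplicities. If it were read as a set only, equality of the characteristic polynomials could fail. This is not a real obstacle: the paper's phrasing "spectrum of $H_A(\theta)$" is naturally the multiset of eigenvalues $\lambda_j(\theta)$, $j=1,\dots,n$. Under that reading the two conditions are literally the same statement.
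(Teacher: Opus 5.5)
Your argument is correct and is just the direct unpacking of the definition: the characteristic polynomial of $H_C(\theta)$ determines its eigenvalue multiset and is determined by it. The paper gives no separate proof; it cites \cite[Theorem~1.1]{PoonWoerd} and treats the equivalence as immediate.

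One correction to your side remark. The set-versus-multiset reading is not actually an issue here. The ordered eigenvalues $\lambda_1(\theta)\ge\dots\ge\lambda_n(\theta)$ of $H_C(\theta)$ are continuous in $\theta$, by Weyl's inequality $\abs{\lambda_j(\theta)-\lambda_j(\theta')}\le\norm{H_C(\theta)-H_C(\theta')}$. So if the set of eigenvalues is constant, each $\lambda_j$ is a continuous map from $\R$ into a fixed finite set and hence constant. This forces the multiplicities to be constant as well, so equality of the characteristic polynomials cannot fail under the set reading either.
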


At the same time we have shown that defect-pencil Circularity is equivalent to $\theta$-independence of
$$
\det\bigl(u\Id-H_C(\theta)-\tau(\Id-C^*C)\bigr)
$$
for all real $u,\tau$. To our knowledge, there is no similar characterization for rotational invariance. This leads to the following natural question.

\begin{quest}\label{quest:det-rot}
Does rotational invariance admit a comparably natural determinantal reformulation, analogous to the determinant criteria for Circularity property and for defect-pencil Circularity?
\end{quest}

To conclude this section, we give an example of a contraction that has defect-pencil Circularity. 

\begin{ex}\label{ex:tower-counterexample}
The nilpotent contraction
\begin{equation}\label{eq:tower-C}
C=
\begin{pmatrix}
0 & 1/2 & -1/5 & 0\\
0 & 0 & 1/2 & 1/5\\
0 & 0 & 0 & 1/2\\
0 & 0 & 0 & 0
\end{pmatrix}
\end{equation}
is not rotationally invariant, but every operator in its partial isometry tower has Circularity property.
\end{ex}

\begin{proof}
Indeed, the unbalanced trace
$$
\tr(C^2C^*C^2C^*)=\frac1{200}\neq 0
$$
shows that $C$ is not rotationally invariant by Proposition~\ref{prop:PW}(ii). On the other hand, for
$$
D=\Id-C^*C=
\begin{pmatrix}
1 & 0 & 0 & 0\\
0 & 3/4 & 1/10 & 0\\
0 & 1/10 & 71/100 & -1/10\\
0 & 0 & -1/10 & 71/100
\end{pmatrix},
$$
a direct symbolic computation shows that
$$
\Delta(\theta;\alpha,s,t)=\det\bigl(\alpha\Id+s(e^{-i\theta}C+e^{i\theta}C^*)+tD\bigr)
$$

is independent of $\theta$. More precisely,
\begin{align*}
\Delta(\theta;\alpha,s,t)
&=\frac1{40000}\Bigl(
40000\alpha^4+126800\alpha^3 t-33200\alpha^2s^2+148764\alpha^2t^2\\
&\qquad\qquad\ -54672\alpha s^2t+76503\alpha t^3+3364 s^4-22097 s^2t^2+14539 t^4
\Bigr).
\end{align*}
for all $\alpha,s,t\in\C$. Equivalently,
$$
\det\bigl(u\Id-H_C(\theta)-\tau D\bigr)
$$
is independent of $\theta$ for all real $u,\tau$. Thus $C$ has defect-pencil Circularity. The conclusion now follows from Theorem~\ref{thm:tower-equiv}.
\end{proof}

In particular, if $A=\mathcal A(C)$ is the first associated partial isometry of \eqref{eq:tower-C}, then both $A$ and $C$ have Circularity property, yet neither is rotationally invariant. In other words, $\mathcal{A}(C)$ is a counterexample to the Gau-Wang-Wu conjecture about nilpotent partial isometries.

\section{A relaxation of the Gau-Wang-Wu conjecture}\label{sec:PQ}
As before, let $C$ be a contraction and $A = \mathcal{A}(C)$ its associated partial isometry. 
The (counter)-example we just described suggests that one may try to relax Conjecture~\ref{conj:GWW} replacing \textit{rotationally invariant} with \textit{has Circularity property}. Among other things, in this section we show that in general this relaxed conjecture does not hold either, see Proposition~\ref{prop:clean-counterexample}. We also show that neither Circularity of \(C\) nor Circularity of \(A\) guarantees the same property for the other matrix. That is, in this section we investigate how much Circularity of $A$ forces Circularity of $C$, and vice versa.

 Define
$$
P_C(x,y):=\det(\Id-xC-yC^*),\qquad
Q_C(x,y):=\det(\Id-xC-yC^*-xyD),
$$
where as before $D =  I - C^*C$.
\begin{prop}\label{prop:bridge}
For every $z\in\C\setminus\{0\}$ and every $\theta\in\R$,
\begin{equation}\label{eq:bridge-Q}
\det\bigl(z\Id- e^{-i\theta}A-e^{i\theta}A^*\bigr)
=z^{m+d}\,Q_C\!\left(\frac{e^{-i\theta}}{z},\frac{e^{i\theta}}{z}\right).
\end{equation}
Likewise,
\begin{equation}\label{eq:bridge-P}
\det\bigl(z\Id- e^{-i\theta}C-e^{i\theta}C^*\bigr)
=z^d\,P_C\!\left(\frac{e^{-i\theta}}{z},\frac{e^{i\theta}}{z}\right).
\end{equation}
Consequently:
\begin{enumerate}[label=\textup{(\roman*)}]
    \item $A$ has Circularity property if and only if $Q_C(x,y)$ depends only on $xy$;
    \item $C$ has Circularity property if and only if $P_C(x,y)$ depends only on $xy$.
\end{enumerate}
\end{prop}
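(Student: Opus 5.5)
The plan is to compute both characteristic polynomials directly, using a Schur complement for \eqref{eq:bridge-Q}, and then to read off the two equivalences by comparing Fourier coefficients in $\theta$. Throughout, $C\in M_d$ and $B$ is $m\times d$ with $B^*B=D=\Id-C^*C$, as in Proposition~\ref{prop:canonical-form}.

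\textbf{Step 1: the identity \eqref{eq:bridge-Q}.} Fix $z\neq 0$ and write
$$
z\Id-e^{-i\theta}A-e^{i\theta}A^*=\begin{bmatrix} z\Id_m & -e^{-i\theta}B\\ -e^{i\theta}B^* & z\Id_d-e^{-i\theta}C-e^{i\theta}C^*\end{bmatrix}.
$$
The upper-left block $z\Id_m$ is invertible, so the Schur complement formula gives the determinant
$$
z^m\det\bigl(z\Id_d-e^{-i\theta}C-e^{i\theta}C^*-z^{-1}B^*B\bigr).
$$
The phases cancel in the correction term, because $e^{i\theta}e^{-i\theta}=1$. Next substitute $B^*B=D$ from \eqref{eq:defect-relation} and factor $z$ out of the $d\times d$ determinant. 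Setting $x=e^{-i\theta}/z$ and $y=e^{i\theta}/z$, we have $xy=z^{-2}$, so $z^{-1}D=z\,xyD$. The determinant therefore becomes $z^{m+d}\det(\Id-xC-yC^*-xyD)=z^{m+d}Q_C(x,y)$. This argument uses only $B^*B=D$, so it holds for any valid choice of $B$.

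\textbf{Step 2: the identity \eqref{eq:bridge-P}.} This is the same computation without the extra block: one simply factors $z$ out of $z\Id-e^{-i\theta}C-e^{i\theta}C^*$.

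\textbf{Step 3: the equivalences (i) and (ii).} By Proposition~\ref{prop:circ_prop_det}, $A$ has Circularity property iff $\det(z\Id-2H_A(\theta))=\det(z\Id-e^{-i\theta}A-e^{i\theta}A^*)$ is independent of $\theta$ for all $z$. It suffices to check this for $z\neq0$, since both sides are polynomials in $z$. Expand the polynomial $Q_C(x,y)=\sum_{a,b}q_{ab}x^ay^b$. Then \eqref{eq:bridge-Q} becomes
$$
z^{m+d}\sum_{a,b}q_{ab}\,z^{-(a+b)}e^{i(b-a)\theta}.
$$
The map $(a,b)\mapsto(a+b,\,b-a)$ is injective. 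Hence, grouping by the power of $z$ and then by the Fourier mode in $\theta$, the expression is $\theta$-independent for all $z\neq0$ iff $q_{ab}=0$ whenever $a\neq b$. That condition says exactly that $Q_C$ is a polynomial in $xy$ alone. The converse direction is immediate, since $xy=z^{-2}$ does not involve $\theta$. The same argument applied to \eqref{eq:bridge-P} gives (ii).

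There is no genuine obstacle here. The only points needing care are the bookkeeping of the $z$-powers in Step 1 (so that $z^{-1}D$ turns into the $xyD$ term) and the injectivity argument in Step 3, which justifies separating the coefficients $q_{ab}$.
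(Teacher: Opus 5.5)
Your proposal is correct and follows the paper's proof: a Schur complement with respect to the block $zI_m$, the substitution $B^*B=D$, and the identification $z^{-2}=xy$, after which radiality is read off from the $\theta$-dependence. Your Step~3 only spells out what the paper states in one line. It expands $Q_C$ and uses injectivity of $(a,b)\mapsto(a+b,\,b-a)$ to separate the Fourier modes, in the same spirit as the paper's proof of Lemma~\ref{lem:T1T2-radiality}.
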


\begin{proof}
Starting from
$$
z\Id- e^{-i\theta}A-e^{i\theta}A^*
=
\begin{bmatrix}
z\Id_m & -e^{-i\theta}B\\
-e^{i\theta}B^* & z\Id_d-e^{-i\theta}C-e^{i\theta}C^*
\end{bmatrix},
$$
we apply the Schur complement with respect to the upper-left block $z\Id_m$ and use $B^*B=D$:
\begin{align*}
\det\bigl(z\Id- e^{-i\theta}A-e^{i\theta}A^*\bigr)
&=z^m\det\left(z\Id_d-e^{-i\theta}C-e^{i\theta}C^*-z^{-1}D\right)\\
&=z^{m+d}\det\left(\Id_d-\frac{e^{-i\theta}}{z}C-\frac{e^{i\theta}}{z}C^*-\frac1{z^2}D\right).
\end{align*}
Since
$$
\frac1{z^2}=\frac{e^{-i\theta}}{z}\cdot \frac{e^{i\theta}}{z},
$$
this is exactly \eqref{eq:bridge-Q}. Formula \eqref{eq:bridge-P} is immediate. Finally, dependence on $\theta$ enters only through the phases of $x=e^{-i\theta}/z$ and $y=e^{i\theta}/z$ with fixed product $xy=z^{-2}$, so isospectrality for all $\theta$ is equivalent to radiality in the variable pair $(x,y)$.
\end{proof}

 For $t\in\R$ define
$$
H(t):=\frac12\bigl(e^{-i t}C+e^{i t}C^*\bigr),
\qquad
K(t):=\frac{i}{2}\bigl(e^{i t}C^*-e^{-i t}C\bigr).
$$
Consider two trace functions
\begin{align}
T_1(t,\mu)&:=\tr\!\left((\mu\Id-H(t))^{-1}K(t)\right),\label{eq:T1}\\
T_2(t,\lambda)&:=\tr\!\left((\lambda^2\Id-\lambda H(t)-\tfrac14(\Id-C^*C))^{-1}\lambda K(t)\right).\label{eq:T2}
\end{align}

\begin{lem}\label{lem:T1T2-radiality}
\begin{enumerate}
\item \(T_1(t,\mu)\equiv 0\) if and only if \(P_C(x,y)\) depends only on \(xy\).
\item \(T_2(t,\lambda)\equiv 0\) if and only if \(Q_C(x,y)\) depends only on \(xy\).
\end{enumerate}
\end{lem}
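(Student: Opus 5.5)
The key observation is that $T_1$ and $T_2$ are, up to a sign, logarithmic $t$-derivatives of the determinants that control radiality of $P_C$ and $Q_C$. Note that
$$
\frac{d}{dt}H(t)=\frac12\bigl(-ie^{-it}C+ie^{it}C^*\bigr)=K(t).
$$
Jacobi's formula then gives, wherever the matrices are invertible,
$$
\frac{\partial}{\partial t}\log\det\bigl(\mu\Id-H(t)\bigr)=-\tr\!\left((\mu\Id-H(t))^{-1}K(t)\right)=-T_1(t,\mu).
$$
Similarly, $D=\Id-C^*C$ does not depend on $t$, so
$$
\frac{\partial}{\partial t}\log\det\bigl(\lambda^2\Id-\lambda H(t)-\tfrac14 D\bigr)=-T_2(t,\lambda).
$$
Hence $T_1\equiv0$ (as a rational function of $\mu$, for each $t$) is equivalent to $\det(\mu\Id-H(t))$ being independent of $t$ for all $\mu$. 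In the same way, $T_2\equiv0$ is equivalent to $\det(\lambda^2\Id-\lambda H(t)-\frac14D)$ being independent of $t$ for all $\lambda$.

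For the forward direction, a vanishing derivative of $\log\det$ on the open set where the determinant is nonzero gives constancy there, hence everywhere by polynomiality in $\mu$ or $\lambda$. Conversely, a $t$-independent determinant gives a vanishing log-derivative off its zero set, so $T_1$ and $T_2$ vanish identically as rational functions.

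It remains to link these determinants with $P_C$ and $Q_C$, which is done by formulas \eqref{eq:bridge-P} and \eqref{eq:bridge-Q} from Proposition~\ref{prop:bridge}.

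\textbf{Case of $P_C$.} With $z=2\mu$ we have $\det(\mu\Id-H(t))=\mu^n P_C\bigl(\tfrac{e^{-it}}{2\mu},\tfrac{e^{it}}{2\mu}\bigr)$. So $t$-independence for all $\mu$ is exactly Circularity of $C$, and by Proposition~\ref{prop:bridge}(ii) this is equivalent to $P_C(x,y)$ depending only on $xy$. I will spell this out directly as follows. Write $P_C(x,y)=\sum_{a,b}p_{ab}x^ay^b$. Substituting $x=se^{-it}$, $y=se^{it}$ gives $\sum p_{ab}s^{a+b}e^{i(b-a)t}$. This is independent of $t$ for all $s$ precisely when $p_{ab}=0$ for $a\ne b$. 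In other words, $P_C$ depends only on $xy$.

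\textbf{Case of $Q_C$.} Factor out $\lambda^2$:
$$
\det\bigl(\lambda^2\Id-\lambda H(t)-\tfrac14D\bigr)=\lambda^{2n}\det\Bigl(\Id-\tfrac{e^{-it}}{2\lambda}C-\tfrac{e^{it}}{2\lambda}C^*-\tfrac{1}{4\lambda^2}D\Bigr)=\lambda^{2n}Q_C\Bigl(\tfrac{e^{-it}}{2\lambda},\tfrac{e^{it}}{2\lambda}\Bigr),
$$
because $xy=\frac1{4\lambda^2}$ for $x=\tfrac{e^{-it}}{2\lambda}$ and $y=\tfrac{e^{it}}{2\lambda}$. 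The same Fourier-coefficient argument, with $s=1/(2\lambda)$ ranging over a set with accumulation points, shows that $t$-independence for all $\lambda$ is equivalent to $Q_C$ having only monomials $x^ay^a$.

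The main point needing care is making the ``$\equiv0$'' statements precise. The traces are rational in $\mu$ (respectively $\lambda$) and defined off finitely many poles for each $t$. I plan to interpret the vanishing on this domain and pass between $\log$-derivatives and polynomial identities by analytic continuation in the spectral variable. I also need to use joint real-analyticity in $t$ to integrate the derivative along $t$ while keeping $\mu$ fixed outside the real spectrum, for instance by taking $\mu$ non-real, where $\mu\Id-H(t)$ is always invertible.
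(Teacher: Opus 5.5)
Your proposal is correct and takes essentially the same route as the paper. In both, Jacobi's formula together with $\tfrac{d}{dt}H(t)=K(t)$ turns $T_1$ (resp.\ $T_2$) into minus the $t$-log-derivative of $P_C\bigl(\tfrac{e^{-it}}{2\mu},\tfrac{e^{it}}{2\mu}\bigr)$ (resp.\ of $Q_C\bigl(\tfrac{e^{-it}}{2\lambda},\tfrac{e^{it}}{2\lambda}\bigr)$), and then the Fourier-coefficient argument characterizes membership in $\C[xy]$. You write out the $Q_C$ case explicitly, which the paper only calls analogous, and you avoid the zero set by taking non-real $\mu$ where the paper takes $\mu$ sufficiently large; this difference is immaterial.
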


\begin{proof}
We prove the statement for $P_C$; the proof for $Q_C$ is analogous.

For a polynomial $R(x,y)\in\C[x,y]$, the condition $R\in\C[xy]$ is equivalent to saying
that for every fixed $\rho\in\C$, the function
\[
t\longmapsto R(\rho e^{-it},\rho e^{it})
\]
is independent of $t$.
Indeed, if $R(x,y)=\widetilde R(xy)$, then this is immediate. Conversely, if
\[
R(x,y)=\sum_{p,q} a_{pq}x^py^q,
\]
then
\[
R(\rho e^{-it},\rho e^{it})
=
\sum_{p,q} a_{pq}\rho^{p+q}e^{i(q-p)t}.
\]
If this is independent of $t$ for every $\rho$, then all Fourier coefficients with $p\neq q$
must vanish, hence $R\in\C[xy]$.

Apply this to $R=P_C$. For $\mu\in\C$, set
\[
x=\frac{e^{-it}}{2\mu},
\qquad
y=\frac{e^{it}}{2\mu}.
\]
Then, by the definition of $H(t)$,
\[
I-xC-yC^*
=
I-\frac1{2\mu}(e^{-it}C+e^{it}C^*)
=
I-\frac1\mu H(t),
\]
and therefore
\[
P_C\!\left(\frac{e^{-it}}{2\mu},\frac{e^{it}}{2\mu}\right)
=
\det\!\left(I-\frac1\mu H(t)\right)
=
\mu^{-n}\det(\mu I-H(t)).
\]
Differentiate with respect to $t$. Jacobi's formula gives
\[
\frac{d}{dt}
P_C\!\left(\frac{e^{-it}}{2\mu},\frac{e^{it}}{2\mu}\right)
=
-
P_C\!\left(\frac{e^{-it}}{2\mu},\frac{e^{it}}{2\mu}\right)\,T_1(t,\mu).
\]

Assume first that $T_1(t,\mu)\equiv 0$. Then the derivative above vanishes, so
$t\mapsto P_C(e^{-it}/(2\mu),e^{it}/(2\mu))$ is constant for every $\mu$. By the first
paragraph, this means that $P_C\in\C[xy]$.

Conversely, assume that $P_C\in\C[xy]$. Then
\[
t\longmapsto P_C\!\left(\frac{e^{-it}}{2\mu},\frac{e^{it}}{2\mu}\right)
\]
is independent of $t$, so its derivative is zero. Thus
\[
P_C\!\left(\frac{e^{-it}}{2\mu},\frac{e^{it}}{2\mu}\right)\,T_1(t,\mu)=0.
\]
For fixed $t$, this holds for all sufficiently large $\mu$, and for such $\mu$ the factor
$P_C(e^{-it}/(2\mu),e^{it}/(2\mu))$ is nonzero. Hence $T_1(t,\mu)=0$ for all sufficiently
large $\mu$, and therefore, being a rational function of $\mu$, it vanishes identically.
So $T_1(t,\mu)\equiv 0$.

\end{proof}
In small dimensions, $\mathcal{A} (C)$ has Circularity property only if $C$ does, as given by the following theorem.

\begin{thm}\label{thm:Q-implies-P-low-dim}
Let $C$ be a nilpotent contraction of size at most $4$. If $T_2 \equiv 0$, then $T_1 \equiv 0$.
\end{thm}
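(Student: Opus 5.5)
By Lemma~\ref{lem:T1T2-radiality}, the hypothesis $T_2\equiv0$ means that $Q_C\in\C[xy]$, and the desired conclusion $T_1\equiv 0$ means that $P_C\in\C[xy]$. So I would work entirely with the two polynomials $P_C$ and $Q_C$ and their logarithmic (trace) expansions. Near the origin,
\[
\log Q_C(x,y)=-\sum_{k\ge1}\frac1k\tr\bigl((xC+yC^*+xyD)^k\bigr),\qquad \log P_C(x,y)=-\sum_{k\ge1}\frac1k\tr\bigl((xC+yC^*)^k\bigr).
\]
A polynomial with constant term $1$ lies in $\C[xy]$ if and only if its logarithm does. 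Hence radiality amounts to the vanishing of every coefficient of $x^py^q$ with $p\ne q$ in the corresponding series.

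\textbf{Reduction of the conclusion.} Since $C$ is nilpotent, $\tr C^k=0$ for all $k$, so the bidegrees $(k,0)$ and $(0,k)$ contribute nothing. For $n\le4$, $P_C$ has total degree at most $n$ and is determined by the power sums of total degree at most $4$ (Newton identities, as in Theorem~\ref{thm:trace-word}). By cyclicity of the trace, the only remaining unbalanced coefficients are those of bidegrees $(2,1),(1,2),(3,1),(1,3)$. These are multiples of $\tr(C^2C^*)$ and $\tr(C^3C^*)$, or of their complex conjugates. So it suffices to prove
\[
\tr(C^2C^*)=0\quad\text{and}\quad \tr(C^3C^*)=0 .
\]
For $n\le 3$ the second trace vanishes automatically, because $C^3=0$.

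\textbf{Extracting information from $Q_C$.} I would then compute the unbalanced coefficients of $\log Q_C$, substituting $D=\Id-C^*C$ and using $\tr C^k=0$. A first check shows that the $(2,1)$ coefficient is
\[
-\bigl(\tr(CD)+\tr(C^2C^*)\bigr)=0,
\]
which holds identically. So the low-degree conditions on $Q_C$ carry no information, and the content lies in higher bidegrees such as $(3,2)$, $(4,2)$, $(4,3)$, and so on, up to total degree $2n$. These involve words like $\tr(C^2C^*C^*C\,C)$ mixed with the lower traces. To keep this manageable, I would use Schur's theorem to take $C$ strictly upper triangular. I would then conjugate by a diagonal unitary to make the superdiagonal entries $c_{12},c_{23},c_{34}$ real and nonnegative. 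This leaves $n(n-1)/2$ entries, only a few of which are complex:
\begin{itemize}
\item for $n=3$: $a,b\ge0$ and $c\in\C$;
\item for $n=4$: three real entries and three complex ones.
\end{itemize}
In these coordinates $\tr(C^2C^*)=\sum c_{ij}c_{jk}\overline{c_{ik}}$. For $n=3$ this is $ab\bar c$, and for $n=4$ it is a short explicit sum. Likewise $\tr(C^3C^*)=c_{12}c_{23}c_{34}\overline{c_{14}}$.

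\textbf{Main step and expected obstacle.} The hard part is showing that the ideal generated by the unbalanced coefficients of $Q_C$, together with their conjugates and the real-variable constraints, forces the two traces above to vanish. For $n=3$ I expect a direct argument. The $(3,2)$ coefficient of $Q_C$ should reduce to a nonzero multiple of $\tr(C^2C^*)$, possibly after combining it with the $(2,3)$ coefficient, which should then give $ab\bar c=0$.

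For $n=4$ I would organise the computation by the Jordan structure of $C$ (rank $1$, $2$ or $3$), i.e.\ by which superdiagonal entries vanish. In each case I would then either:
\begin{itemize}
\item find a combination of bidegree-$(p,q)$ coefficients whose leading part is $\tr(C^3C^*)$ times a positive quantity, such as $c_{12}^2c_{23}^2c_{34}^2$, and then do the same for $\tr(C^2C^*)$; or, failing that,
\item run a Gröbner basis or resultant elimination, in the real and imaginary parts of the entries, of the $Q$-radiality equations against the target traces.
\end{itemize}

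I expect this elimination in the generic rank-$3$ case to be the main obstacle. The danger is that the $Q$-conditions only yield the target traces up to factors that might vanish. Those exceptional loci would then need separate treatment, by returning to the smaller set of free entries they leave.
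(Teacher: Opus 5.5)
Your reduction of the conclusion is correct. For a nilpotent $C$ of size at most $4$, $P_C\in\C[xy]$ is equivalent to $\tr(C^2C^*)=\tr(C^3C^*)=0$. You are also right that the $(2,1)$ coefficient of $\log Q_C$ vanishes identically. The gap is that the decisive implication, namely that $Q_C\in\C[xy]$ forces these two traces to vanish, is never established. For $n=3$ you only predict that the $(3,2)$ coefficient ``should'' be a multiple of $\tr(C^2C^*)$. For $n=4$ you offer a Jordan-type case split and a Gr\"obner elimination that you do not carry out, and you flag possible vanishing factors yourself. As written, the argument stops exactly where the content of the theorem begins.

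The missing idea is that no coordinates are needed: two specific coefficients of $\log Q_C$ isolate the two traces exactly, for nilpotent $C$ of any size. Since $D=\Id-C^*C$,
\[
\Id-xC-yC^*-xyD=(\Id-yC^*)(\Id-xC)-xy\Id .
\]
For nilpotent $C$ both factors have determinant $1$. Hence $Q_C=\det(\Id-xyR)$ with $R=(\Id-yC^*)^{-1}(\Id-xC)^{-1}=\sum_{a,b\ge0}y^ax^bC^{*a}C^b$, and $\log Q_C=-\sum_{k\ge1}\frac{(xy)^k}{k}\tr(R^k)$. For $p\ge3$ only $k=1,2$ contribute to the coefficient of $x^py^2$. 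That coefficient is $-\tr(C^{p-1}C^*)-\frac{p-1}{2}\tr(C^{p-2})=-\tr(C^{p-1}C^*)$. So the $(3,2)$ and $(4,2)$ coefficients are $-\tr(C^2C^*)$ and $-\tr(C^3C^*)$. A brute-force check agrees: after substituting $D$, all words of length $5$ (respectively $6$) cancel. With this, your reduction finishes the proof at once.

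The paper does the same thing in the $H,K$ language. It combines $A_4$ with $\operatorname{Im}\tr((H-iK)^5)=0$ to isolate $\tr(H^2K)=\frac14\operatorname{Im}\bigl(e^{-it}\tr(C^2C^*)\bigr)$. It then combines $A_5$ with the degree-$6$ nilpotency identity to kill $\tr(H^3K)$. Finally it closes with Cayley--Hamilton applied to $\tr(H^mK)$, instead of Newton's identities on the $P_C$ side.
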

\begin{proof}

Let us expand $T_2$ at $\lambda=\infty$. That gives
\begin{equation}\label{eq:T2Laurent}
T_2(t,\lambda)=\sum_{N\ge 0}\lambda^{-(N+1)}A_N(t).
\end{equation}
The first coefficients are:
\begin{align}
A_0&=\tr(K),\label{eq:A0}\\
A_1&=\tr(HK),\label{eq:A1}\\
A_2&=\frac14\tr(K)+\frac34\tr(H^2K)-\frac14\tr(K^3),\label{eq:A2}\\
A_3&=\frac12\tr(HK)+\frac12\tr(H^3K)-\frac12\tr(HK^3),\label{eq:A3}
\end{align}
We also need $A_4$ and $A_5$ for what follows, yet they are rather cumbersome:
\begin{align}
A_4={}&\frac1{16}\tr(K)+\frac58\tr(H^2K)-\frac18\tr(K^3)
+\frac5{16}\tr(H^4K)\nonumber\\
&-\frac5{16}\tr(H^2K^3)-\frac5{16}\tr(HKHK^2)
+\frac1{16}\tr(K^5),\label{eq:A4}\\[0.5em]
A_5={}&\frac3{16}\tr(HK)+\frac58\tr(H^3K)-\frac38\tr(HK^3)
+\frac3{16}\tr(H^5K)\nonumber\\
&-\frac3{16}\Big(\tr(H^3K^3)+\tr(H^2KHK^2)+\tr(H^2K^2HK)\Big)
-\frac1{16}\tr(HKHKHK)\nonumber\\
&+\frac3{16}\tr(HK^5).\label{eq:A5}
\end{align}

Similarly, let us expand \(T_1\) at infinity:
$$
(\mu\Id-H(t))^{-1}
=
\frac1\mu\Bigl(\Id-\mu^{-1}H(t)\Bigr)^{-1}
=
\frac1\mu\sum_{n=0}^\infty \frac{H(t)^n}{\mu^n}.
$$
Therefore
$$
T_1(t,\mu)
=
\tr\!\left((\mu\Id-H(t))^{-1}K(t)\right)
=
\sum_{n=0}^\infty \frac{\tr(H(t)^nK(t))}{\mu^{n+1}}.
$$

In particular,
$$
T_1(t,\mu)\equiv 0
\quad\Longleftrightarrow\quad
\tr(H(t)^nK(t))=0
\ \text{for all }n\ge 0.
$$

We now show that if $T_2\equiv 0$, then
\eq{eq:T1_moments_dead}
\tr(K)=\tr(HK)=\tr(H^2K)=\tr(K^3)=\tr(H^3K)=\tr(HK^3)=0,
\en
identically in $t$.
Indeed, from $A_0=0$ and $A_1=0$:
$$
\tr(K)=0,\qquad \tr(HK)=0.
$$

From $A_2=0$:
$$
\tr(K^3)=3\,\tr(H^2K).
$$

From $A_4=0$ and the imaginary part of $\tr((H-iK)^5)=0$, one gets two formulas for
$$
\tr(H^2K^3)+\tr(HKHK^2),
$$
whose difference forces
$$
\tr(H^2K)=0,
$$
hence $\tr(K^3)=0$.

From $A_3=0$:
$$
\tr(H^3K)=\tr(HK^3).
$$

From $A_5=0$ and the imaginary part of $\tr((H-iK)^6)=0$, one gets a second linear relation between $\tr(H^3K)$ and $\tr(HK^3)$, forcing both to vanish.

In dimension $n\le 4$, Cayley--Hamilton expresses all higher powers $H^m$ as linear combinations of $\Id,H,H^2,H^3$. Since equalities \eqref{eq:T1_moments_dead} hold, all moments $\tr(H^mK)$ vanish, hence $T_1\equiv 0$.

\end{proof}

\begin{cor}\label{cor:A-to-C-low-dim}
Let
$$
A=\begin{bmatrix}0&B\\0&C\end{bmatrix}
$$
be a unitarily irreducible nilpotent partial isometry with $C$ of size at most $4$. If $A$ has Circularity property, then $C$ has Circularity property.
\end{cor}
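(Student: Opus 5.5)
The plan is to chain the equivalences already established, so that Theorem~\ref{thm:Q-implies-P-low-dim} does all the real work. Write $A$ in canonical form on $\C^m\oplus\C^d$. The first step is to check that the corollary is really a statement about the contraction $C$ alone, so that Theorem~\ref{thm:Q-implies-P-low-dim} applies to it.

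\begin{enumerate}[label=\textup{(\arabic*)}]
\item \emph{$C$ is a nilpotent contraction of size at most $4$.} Since $A$ is block upper triangular with diagonal blocks $0_m$ and $C$, we have $\sigma(A)=\{0\}\cup\sigma(C)$. Nilpotency of $A$ therefore forces $\sigma(C)=\{0\}$, so $C$ is nilpotent. From \eqref{eq:defect-relation} we get $C^*C\le \Id_d$, so $C$ is a contraction, and its size $d\le 4$ is given.
\item \emph{The $B$-block may be replaced by $B_C$.} We have $B^*B=D=\Id-C^*C$. The relevant determinant identity \eqref{eq:bridge-Q} uses only this relation. In it, the extra factor $z^{m}$ (with $m=\dim\ker A$) is independent of $\theta$, so replacing $B$ by $B_C$ does not affect the argument.
\end{enumerate}

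Next I translate the hypothesis into the vanishing of $T_2$.

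\begin{enumerate}[label=\textup{(\arabic*)}]\setcounter{enumi}{2}
\item \emph{From Circularity of $A$ to $Q_C$.} By Proposition~\ref{prop:bridge}(i), Circularity property of $A$ is equivalent to $Q_C(x,y)$ depending only on $xy$.
\item \emph{From $Q_C$ to $T_2$.} By Lemma~\ref{lem:T1T2-radiality}(2), this is equivalent to $T_2\equiv 0$.
\end{enumerate}

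One small point needs checking here: the normalization in \eqref{eq:T2}. Substituting $x=e^{-it}/(2\lambda)$ and $y=e^{it}/(2\lambda)$ gives $xy\,D=D/(4\lambda^2)$. Hence $Q_C$ at this point equals $\lambda^{-2d}\det\bigl(\lambda^2\Id-\lambda H(t)-\tfrac14 D\bigr)$. Jacobi's formula then produces the $t$-derivative as a multiple of $T_2$, using $\partial_t H=K$. The lemma is stated precisely for this normalization, so I will just cite it.

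The conclusion then follows in two moves.

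\begin{enumerate}[label=\textup{(\arabic*)}]\setcounter{enumi}{4}
\item \emph{Apply the theorem.} Theorem~\ref{thm:Q-implies-P-low-dim} applies to $C$ and yields $T_1\equiv 0$.
\item \emph{Return to Circularity of $C$.} Lemma~\ref{lem:T1T2-radiality}(1) gives that $P_C$ depends only on $xy$. Proposition~\ref{prop:bridge}(ii) then gives that $C$ has Circularity property.
\end{enumerate}

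Unitary irreducibility is not needed for this chain. Via Proposition~\ref{prop:irr} it only guarantees that $C$ is irreducible and $B$ has full row rank, so that the canonical form is the one intended. I will remark on this rather than use it.

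The main obstacle is not in this corollary itself but in the cited Theorem~\ref{thm:Q-implies-P-low-dim}. Within the corollary, the only point requiring care is step~(1), namely that nilpotency passes from $A$ to $C$, which the spectral inclusion $\sigma(A)=\{0\}\cup\sigma(C)$ handles directly.
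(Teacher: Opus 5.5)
Your proposal is correct and is essentially the paper's argument: the corollary follows by chaining Proposition~\ref{prop:bridge}(i), Lemma~\ref{lem:T1T2-radiality}(2), Theorem~\ref{thm:Q-implies-P-low-dim}, Lemma~\ref{lem:T1T2-radiality}(1) and Proposition~\ref{prop:bridge}(ii). Your check that $C$ inherits nilpotency from $A$ via $\det(\lambda\Id-A)=\lambda^m\det(\lambda\Id-C)$ supplies the one detail the paper leaves unstated; the theorem genuinely needs it, through $\tr((H-iK)^k)=e^{ikt}\tr(C^{*k})=0$. You are also right that unitary irreducibility is not used anywhere in the chain.
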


The converse fails already in size $4$.

\begin{ex}\label{ex:P-not-Q}
Consider
$$
C=
\begin{bmatrix}
0&-\frac{1+i}{4}&-\frac{1}{4}&0 \\
0&0&\frac{1}{4}&\frac{-1+i}{4}\\
0&0&0&\frac{1}{4}\\
0&0&0&0\\
\end{bmatrix}.
$$
It is nilpotent and $\norm{C}<1$. A direct computation gives
$$
P_C(x,y)=1-\frac{7}{16}xy+\frac{1}{32}x^2y^2,
$$
so $P_C\in\C[xy]$ and $C$ has Circularity property. However, the Laurent coefficient $A_6$ for $T_2$ is
$$
A_6=-\frac{1}{65536}\neq 0,
$$
so $T_2\not\equiv 0$, equivalently $Q_C\notin\C[xy]$. Therefore the associated partial isometry built from $C$ does \emph{not} have Circularity property. See Figure~\ref{fig:noncircA_circC} for an approximate plot of the Kippenhahn curve of $\mathcal{A}(C)$.
\end{ex}
\begin{figure}[H]
    \centering
    \includegraphics[width=0.9\linewidth]{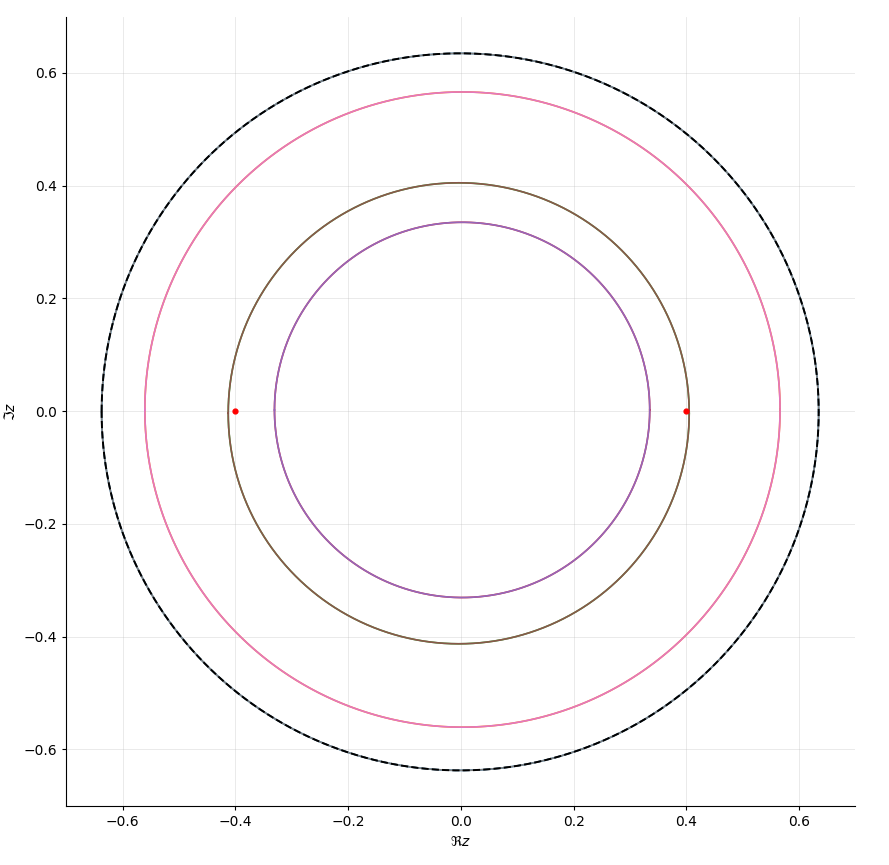}
    \caption{The Kippenhahn curve of a partial isometry from Example~\ref{ex:P-not-Q}. Red dots are placed at $(-0.4, 0) $ and $(0.4, 0)$, to highlight that the green branch is not a circle.}
    \label{fig:noncircA_circC}
\end{figure}
\begin{rem}
    In fact one may check that $A_6$ is the first obstruction for $\mathcal{A}(C)$ to have Circularity property, meaning that $A_i = 0$ for $i \leq 5$ provided $C$ has Circularity property. 
\end{rem}

The bound in Theorem~\ref{thm:Q-implies-P-low-dim} is sharp: there is a size-$5$ nilpotent contraction $C$ such that
$$
Q_C(x,y)\in\C[xy]
\qquad\text{but}\qquad
P_C(x,y)\notin\C[xy].
$$
In particular, Corollary~\ref{cor:A-to-C-low-dim} fails in size $5$, yet the example we found has a rather involved construction.
\begin{prop}\label{ex:rotationa_A_nonrot_C}

Set
$$
C_\alpha=
\begin{bmatrix}
0&\frac12&s&c&0\\
0&0&\frac12&f&-\frac54c\\
0&0&0&\frac12&\alpha s\\
0&0&0&0&\frac25\\
0&0&0&0&0
\end{bmatrix}.
$$

    Let $\Phi$ be the quintic
$$
\Phi(k)=24182784k^5-927166375k^4+274015368k^3+10740240k^2-318470912k-47232000.
$$
Choose any real root $\alpha\in(-0.55,-0.54)$ and set
$$
s^2=\frac{36(4\alpha+5)}{1025\alpha(5\alpha-4)},
\qquad
c=\frac{32(4\alpha+5)}{205(5\alpha-4)},
\qquad
f=-\frac{5(4\alpha+5)}{41}s.
$$
Then $\mathcal{A}(C_\alpha)$ has Circularity property, while $C_\alpha$ has not.
\end{prop}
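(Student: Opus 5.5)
The plan is to reduce both claims to explicit polynomial identities, using Proposition~\ref{prop:bridge}: $\mathcal A(C_\alpha)$ has Circularity property if and only if $Q_{C_\alpha}(x,y)\in\C[xy]$, and $C_\alpha$ has it if and only if $P_{C_\alpha}(x,y)\in\C[xy]$.

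\textbf{Setting up the polynomials.} Since $C_\alpha$ is strictly upper triangular of size $5$, both $P_{C_\alpha}$ and $Q_{C_\alpha}$ are polynomials of total degree at most $5$ in each of $x,y$, with coefficients polynomial in $(\alpha,s,c,f)$ and $s^2$. The parameters are real, since $s^2>0$ for $\alpha\in(-0.55,-0.54)$: there $4\alpha+5>0$, $\alpha<0$ and $5\alpha-4<0$.

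\textbf{Circularity of $\mathcal A(C_\alpha)$.} First compute $D=\Id-C_\alpha^*C_\alpha$ and expand $Q_{C_\alpha}(x,y)=\sum a_{pq}x^py^q$. Its coefficients satisfy $a_{qp}=\overline{a_{pq}}$, and everything here is real, so they are symmetric. Moreover $C_\alpha$ is nilpotent, so $Q(x,0)=\det(\Id-xC)=1$. Hence only the off-diagonal coefficients $a_{pq}$ with $q>p\ge1$ need checking, and $q-p\le 3$ for degree reasons.

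It is cleaner to use the $\theta$-grading. Conjugating by $\diag(1,e^{i\theta},\dots,e^{4i\theta})$ turns $C_\alpha$ into a matrix whose $(i,j)$ entry carries $e^{i(j-i-1)\theta}$. So only the entries $s$, $c$, $f$, $-\tfrac54 c$ and $\alpha s$ (superdiagonal $\ge2$) produce $\theta$-dependence. The finitely many Fourier coefficients of $\det(z\Id-2rH_A(\theta))$ of frequencies $\pm1,\pm2,\pm3$ are then explicit polynomials in $\alpha,s,c,f$. I expect these to become:
\begin{enumerate}[label=(\arabic*)]
\item two relations that are linear in $c$ and $f$; solving them gives $c$ and $f/s$ as the stated rational functions of $\alpha$;
\item a relation giving $s^2$;
\item a final relation that, after substitution and clearing denominators, is a multiple of $\Phi(\alpha)$.
\end{enumerate}
So the first step is to substitute the given formulas and verify symbolically, by reducing modulo $\Phi(\alpha)$, that every coefficient $a_{pq}$ with $p\ne q$ vanishes.

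\textbf{The obstacle: parity and the size of the computation.} Terms odd in $s$ must vanish or be handled using $f\propto s$ and the $\alpha s$ entry. The combination $s\cdot(\text{odd in }s)$ reduces to $s^2$, which is rational in $\alpha$, so every coefficient becomes a rational function of $\alpha$ alone. This reduction, and checking that its numerator lies in the ideal generated by $\Phi$, is the main obstacle. I would do it with computer algebra: compute the remainder of each numerator upon division by $\Phi$ and confirm that it is zero. One must also confirm that the denominators $\alpha$ and $5\alpha-4$ do not vanish on the root.

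\textbf{Existence of the root.} A sign check gives $\Phi(-0.55)$ and $\Phi(-0.54)$ of opposite signs, so a real root exists in the interval. It also remains to verify $\norm{C_\alpha}\le1$, so that $\mathcal A(C_\alpha)$ is defined. For this I would check numerically that $\Id-C_\alpha^*C_\alpha$ is positive definite, with a rigorous eigenvalue margin, since the entries are roughly $\le 1/2$ in size.

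\textbf{Non-circularity of $C_\alpha$.} By Proposition~\ref{prop:PW}(i) it suffices to exhibit one nonzero off-diagonal coefficient of $P_{C_\alpha}$. Equivalently, find a nonvanishing unbalanced aggregated trace, e.g. $\sum\tr w$ over words of length $k$ in $C,C^*$ with $\ell\ne k/2$ letters $C^*$. The lowest candidate is the $x y^2$ coefficient, which is essentially $\tr(C^*{}^2C)$-type sums, a polynomial in $s,c,f$, such as $\tfrac12 s+\dots$. I would compute it, reduce it modulo $\Phi$ as above, and show it is nonzero at the root. This can be done either by checking that its resultant with $\Phi$ is nonzero, or simply by interval evaluation at the isolated root.
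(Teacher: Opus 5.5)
Your plan for $\mathcal A(C_\alpha)$ follows the paper's proof. You substitute $s^2,c,f$, expand $Q_{C_\alpha}$, and check that every unbalanced coefficient vanishes modulo $\Phi$. The paper finds that, up to symmetry, the only survivor is the $x^4y^5$ coefficient: $s\Phi(\alpha)$ divided by a nonzero multiple of $\alpha(5\alpha-4)^2$.

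The gap is in the non-circularity of $C_\alpha$. The witness you single out, the $xy^2$ coefficient of $P_{C_\alpha}$, equals $-\tr(C_\alpha C_\alpha^{*2})=-\tr(C_\alpha^2C_\alpha^*)$, since all entries are real. A direct computation gives
\[
\tr(C_\alpha^2C_\alpha^*)=\frac s4+\frac f4+\frac{\alpha s}{5}+\frac{(4-5\alpha)\,cs}{8}.
\]
After substituting the given $f$ and $c$, the coefficient of $s$ becomes $\frac1{820}\bigl(205-41(4\alpha+5)+164\alpha\bigr)=0$ for every $\alpha$. Likewise the $xy^3$ coefficient is $-\tr(C_\alpha^3C_\alpha^*)=-\bigl(\frac c8-\frac54\cdot\frac c{10}\bigr)=0$; this is exactly what the entry $-\frac54c$ is there for. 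The parameters are designed to kill these low-order coefficients. So the step ``compute the $xy^2$ coefficient and show it is nonzero at the root'' fails identically in $\alpha$, and your guess ``$\tfrac12 s+\dots$'' for that coefficient is wrong.

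What is missing is the correct witness and the reason it survives. $C_\alpha$ is strictly upper triangular with $(C_\alpha)_{15}=0$. So in the permutation expansion of $\det(\Id-xC_\alpha-yC_\alpha^*)$, every $C$-entry has column-minus-row displacement in $\{1,2,3\}$ and every $C^*$-entry in $\{-1,-2,-3\}$. These displacements must sum to zero, and at most $5$ entries are chosen. Hence the only unbalanced monomials that can occur are $xy^2$, $xy^3$, $x^2y^3$ and their mirrors.

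The $x^2y^3$ coefficient is therefore the only possible witness. Here it equals $-\tr(C_\alpha^2C_\alpha^{*3})-\tr(C_\alpha C_\alpha^*C_\alpha C_\alpha^{*2})$. The paper computes it as $p_{23}=-s\Psi(\alpha)/\bigl(282576100\,\alpha(25\alpha^2-40\alpha+16)\bigr)$ and gets $p_{23}\neq0$ from $s\neq0$ and $\gcd(\Phi,\Psi)=1$. Your resultant or interval-evaluation check works once you apply it to $\Psi$.

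The remaining pieces match the paper: the sign change of $\Phi$ on $(-0.55,-0.54)$, and the contraction check. For the latter it is enough that the squared Frobenius norm of $C_\alpha$ is about $0.96$ on that interval, so $\norm{C_\alpha}<1$.
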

\begin{proof}
We show that $Q_{C_\alpha}(x,y)\in\C[xy]$, whereas $P_{C_\alpha}(x,y)\notin\C[xy]$.
 Explicit computations show, provided $C_\alpha$ has the above form, that the only remaining coefficient of $Q_{C_\alpha}$ that does not correspond to the power of $xy$ is
\[
q_{45}=-\frac{s\,\Phi(\alpha)}{1130304400\,\alpha(25\alpha^2-40\alpha+16)},
\]
it corresponds to $x^4y^5$. So $Q_{C_\alpha}(x,y)\in\C[xy]$ whenever $\Phi(\alpha)=0$.
At the same time the only surviving coefficient of $P_{C_\alpha}$ that does not correspond to the power of $xy$ is
\[
p_{23}=-\frac{s\,\Psi(\alpha)}{282576100\,\alpha(25\alpha^2-40\alpha+16)},
\]
where
\[
\Psi(k)=6045696k^5-285636125k^4+127086692k^3+12334000k^2-97261504k-11808000.
\]
Moreover,
\[
\gcd(\Phi,\Psi)=1,
\]
so for any root $\alpha$ of $\Phi$ one has $\Psi(\alpha)\neq 0$, hence $p_{23}\neq 0$ and $P_{C_\alpha}(x,y)\notin\C[xy]$. Finally, there is a root of $\Phi(k)$ on the specified interval by the intermediate value theorem, and with this choice of $\alpha$ the operator $C_\alpha$ is indeed a contraction.
\end{proof}

 We conclude this section with an example of a $5$-by-$5$ contraction whose associated partial isometry has circular numerical range, yet does not have Circularity property.

\begin{prop}\label{prop:clean-counterexample}

Let \(s\in(0,\tfrac14)\) be the smaller root of
\begin{equation}\label{eq:def-s-clean}
g(s):=
4096(7+\sqrt{145})\,s^2-32(1403+125\sqrt{145})\,s+(9091+757\sqrt{145})=0.
\end{equation}
Set
\[
p:=\sqrt{s},
\qquad
C:=
\begin{bmatrix}
0&\frac12&0&0&0\\
0&0&\frac14&p&0\\
0&0&0&\frac14&-p\\
0&0&0&0&\frac12\\
0&0&0&0&0
\end{bmatrix},
\]
\[
D:=I_5-C^*C,
\qquad
B:=D^{1/2},
\qquad
A:=
\begin{bmatrix}
0&B\\
0&C
\end{bmatrix}.
\]
Then:
\begin{enumerate}[label=\textup{(\roman*)}]

    \item \(A\) is unitarily irreducible;
    \item \(W(A)\) is a disc;
    \item \(Q_C(x,y)\notin\mathbb C[xy]\).
\end{enumerate}
That is, \(A\) has circular numerical range but does not have Circularity property.
\end{prop}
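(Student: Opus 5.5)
The proof splits into the three items, and each reduces to a finite computation with the explicit $5\times5$ matrix $C$.

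For (i) we use Proposition~\ref{prop:irr}: we must show that $B=D^{1/2}$ has full row rank and that $C$ is unitarily irreducible. By construction $B_C$ acts on $\im D$, so the row-rank condition is automatic. It only requires checking that $D=\Id-C^*C$ is computed as intended and that $\norm{C}\le1$, which holds since $s<\tfrac14$ keeps the entries small. For irreducibility of $C$ we exploit its strictly upper triangular shape with nonzero superdiagonal entries $\tfrac12,\tfrac14,\tfrac14,\tfrac12$. The kernel of $C$ is spanned by $e_1$, and $C^k e_5$ has a nonzero component along $e_{5-k}$ for each $k$. An invariant subspace $\mathcal M$ reducing $C$ must therefore be invariant under $C$ and $C^*$. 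If $\mathcal M$ contains a vector with a nonzero component in the highest index, repeated application of $C$ generates a flag forcing $\mathcal M=\C^5$. Otherwise both $\mathcal M$ and $\mathcal M^\perp$ contain $e_1$-components, and this contradicts $\dim\ker C=1$, since $\ker C=\ker C|_{\mathcal M}\oplus\ker C|_{\mathcal M^\perp}$ and a nilpotent reducing piece has nontrivial kernel. Alternatively, we check directly that the algebra generated by $C,C^*$ is all of $M_5$, using that $C^*C$ has distinct eigenvalues for this $s$.

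For (iii) we use Proposition~\ref{prop:bridge}(i). We expand $Q_C(x,y)=\det(\Id-xC-yC^*-xyD)$ symbolically in $p$ (with $p^2=s$), and we exhibit a single coefficient $q_{ab}$ with $a\ne b$ that does not vanish at the chosen $s$. The mixed terms come from the off-diagonal pairs $p,-p$, so we expect an odd-in-$p$ or low-degree unbalanced coefficient such as $x^2y^3$ or $x^3y^4$ to be an explicit polynomial in $s$. It then suffices to show that this polynomial is coprime to $g$, or to evaluate it at the root numerically with a rigorous error bound.

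Item (ii) is the main obstacle, because $W(A)$ is a disc without $C(A)$ being a union of circles. The plan is to compute $\det(z\Id-2rH_A(\theta))$ via the bridge formula \eqref{eq:bridge-Q}. This gives the Kippenhahn polynomial, whose $\theta$-dependence is confined to the unbalanced terms found in (iii). We then determine the largest eigenvalue $\lambda_{\max}(\theta)$ of $H_A(\theta)$ and show it is constant. Concretely, we would factor the $\theta$-dependent characteristic polynomial, in the variable $u$, as $(u-R)\cdot S_\theta(u)$ over $\mathbb Q(\sqrt{145},p)$, with $R$ independent of $\theta$. The choice of $s$ as a root of $g$, whose coefficients involve $\sqrt{145}$, is presumably exactly the condition making such a constant root $R$ exist; the paper suggests $R$ is tied to $\sqrt{145}$. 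We then verify that all roots of $S_\theta$ stay $\le R$ for every $\theta$. One way is to bound $\max_\theta\lambda_{\max}$ of the compression through Sturm/discriminant arguments in $\cos\theta$. Another is to check that $S_\theta(u)$ has no root crossing $R$ by showing $S_\theta(R)\ne0$ for all $\theta$ together with a single evaluation at $\theta=0$. With $\lambda_{\max}(H_A(\theta))\equiv R$, the support function of $W(A)$ is constant, so $W(A)$ is the disc of radius $R$ centered at $0$.
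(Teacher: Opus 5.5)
Your plan for (ii) works but takes a different route from the paper; your plans for (i) and (iii) are essentially the paper's, apart from one fixable slip in (i).

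\textbf{Items (i) and (iii).} The paper also argues via Proposition~\ref{prop:irr} together with $\dim\ker C=1$ (equivalently $C^4\neq0=C^5$, so $C$ is similar to a single Jordan block). For (iii) it also exhibits an explicit unbalanced coefficient of $Q_C$. It finds $q_{32}=q_{23}=p/16$, so no coprimality argument with $g$ is needed.

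\textbf{The slip in (i).} The $B$ in the statement is $D^{1/2}\in M_5$ acting on the whole first summand $\C^5$; it is not $B_C$. So full row rank is not automatic: it means $D\succ0$, i.e.\ $\norm{C}<1$ strictly. The bound $\norm{C}\le1$ would not suffice, because any $v\in\ker D$ makes the line spanned by $v\oplus0$ a reducing subspace of $A$. The fix is easy, e.g.\ $\norm{C}^2\le\norm{C}_1\norm{C}_\infty<1\cdot\tfrac34$ since $p<\tfrac12$, or the paper's Sylvester check on $D$.

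\textbf{Item (ii).} By \eqref{eq:bridge-Q} and the expansion of $Q_C$,
\[
\det\bigl(z\Id-2H_A(\theta)\bigr)=F(z)+\cos\theta\,G(z),\qquad G(z)=\frac{p\,z}{256}\bigl(32z^4-76z^2+27\bigr),
\]
with $F$ even and independent of $\theta$.
\begin{itemize}
\item A positive $\theta$-independent eigenvalue must be a common root of $F$ and $G$. This forces $z^2=(19\pm\sqrt{145})/16$.
\item The plus sign gives $R=\omega$, and $F(2\omega)=g(s)/65536$.
\item So $g(s)=0$ is exactly your common-root condition. Your approach thus explains where $\omega$ and $g$ come from, which the paper leaves implicit.
\end{itemize}

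What remains for you is:
\begin{enumerate}[label=(\alph*)]
\item showing $S_\theta(R)\neq0$ for all $\theta$, i.e.\ an inequality $|a|>|b|$ for the $u$-derivative $a+b\cos\theta$ of the characteristic polynomial at $R$;
\item a base-point check that $R$ is the largest eigenvalue of the $10\times10$ matrix $H_A(0)$;
\item the no-crossing continuity argument.
\end{enumerate}

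The paper avoids all three. A Schur complement against the block $2\omega\Id_5$ turns $\omega\Id-H_A(\theta)\succeq0$ into $M(\theta)\succeq0$ for an explicit $5\times5$ matrix. Its leading minors $\Delta_1,\Delta_2,\Delta_3$ are positive constants, and $\Delta_4(\theta)$ is affine in $\cos\theta$ with a positive lower bound. Finally $\Delta_5\equiv g(s)/65536=0$. This one computation shows, uniformly in $\theta$, that $\omega$ is an eigenvalue, that it is the largest, and (from $\Delta_4>0$) that it is simple. Simplicity is exactly your step (a), and your step (b) is most naturally done by the same computation at $\theta=0$.

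In short, your version explains where $\omega$ and $g$ come from. The paper's gives a short algebraic certificate with no continuity argument and no eigenvalue localization.
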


\begin{proof}

Since
\[
g(0)=9091+757\sqrt{145}>0
\]
and
\[
g\!\left(\frac14\right)=-341+13\sqrt{145}<0,
\]
the smaller root \(s\) of \eqref{eq:def-s-clean} indeed belongs to \((0,\tfrac14)\). In particular
\[
p=\sqrt{s}>0.
\]

First we show that $A$ is unitarily irreducible using Proposition~\ref{prop:irr}. 

A direct computation gives
\[
D=
\begin{bmatrix}
1&0&0&0&0\\
0&\frac34&0&0&0\\
0&0&\frac{15}{16}&-\frac p4&0\\
0&0&-\frac p4&\frac{15}{16}-p^2&\frac p4\\
0&0&0&\frac p4&\frac34-p^2
\end{bmatrix}.
\]
Its leading principal minors are
\[
1,\qquad \frac34,\qquad \frac{45}{64},
\qquad
\frac{3(225-256s)}{1024},
\qquad
\frac{3(1024s^2-1728s+675)}{4096}.
\]
Since \(0<s<\frac14\), we have
\[
225-256s>225-64=161>0,
\]
and
\[
1024s^2-1728s+675>1024\cdot 0-\frac{1728}{4}+675=243>0.
\]
Hence all leading principal minors are positive, so \(D\succ0\) by Sylvester's criterion. In particular \(B=D^{1/2}\) is invertible.
It remains to show that $C$ is unitarily irreducible. Indeed, $C^4 \neq 0$, while $C^5 = 0$. If $C$ were (not necessarily unitarily) similar to a direct sum of two smaller matrices, then $C^4$ would be equal to zero.

Now we show that $W(A)$ is a disc.
As before, for \(\theta\in\mathbb R\), set
\[
H_A(\theta):= \frac{1}{2} (e^{-i\theta}A+e^{i\theta}A^*).
\]
Then
\[
2\omega I_{10}-2H_A(\theta)
=
\begin{bmatrix}
2\omega I_5&-e^{-i\theta}B\\
-e^{i\theta}B&2\omega I_5-e^{-i\theta}C-e^{i\theta}C^*
\end{bmatrix},
\]
where 
\[
\omega:=\frac18\sqrt{19+\sqrt{145}}.
\]
Since \(2\omega I_5\) is positive definite, Schur complement shows that
\[
2\omega I_{10}-2H_A(\theta) \succeq 0
\iff
M(\theta)\succeq 0,
\]
where
\[
M(\theta):=
(4\omega^2-1)I_5 + C^*C -2\omega\bigl(e^{-i\theta}C+e^{i\theta}C^*\bigr).
\]

We now compute the leading principal minors \(\Delta_k(\theta)\) of \(M(\theta)\). Since
\[
\omega^2=\frac{19+\sqrt{145}}{64},
\]
a direct computation gives
\[
\Delta_1=\frac{3+\sqrt{145}}{16},
\qquad
\Delta_2=\frac{45+3\sqrt{145}}{128},
\qquad
\Delta_3=\frac{257+23\sqrt{145}}{1024},
\]
hence \(\Delta_1,\Delta_2,\Delta_3>0\).

For the fourth leading principal minor one obtains
\[
\Delta_4(\theta)=
\frac{1859+149\sqrt{145}}{8192}
-\frac{83+5\sqrt{145}}{128}\,s
-\frac{(7+\sqrt{145})\sqrt{19+\sqrt{145}}}{512}\,\sqrt{s}\cos\theta.
\]
Since \(0<s<\frac14\), we have \(\sqrt{s}<\frac12\), and therefore
\begin{align*}
\Delta_4(\theta)
&\ge
\frac{1859+149\sqrt{145}}{8192}
-\frac{83+5\sqrt{145}}{128}\cdot\frac14
-\frac{(7+\sqrt{145})\sqrt{19+\sqrt{145}}}{512}\cdot\frac12\\
&=
\frac{531+69\sqrt{145}}{8192}
-\frac{(7+\sqrt{145})\sqrt{19+\sqrt{145}}}{1024}.
\end{align*}
To see that this lower bound is positive, it is enough to verify
\[
531+69\sqrt{145}
>
8(7+\sqrt{145})\sqrt{19+\sqrt{145}}.
\]
Squaring both sides gives
\[
(531+69\sqrt{145})^2
-64(7+\sqrt{145})^2(19+\sqrt{145})
=
606482+43838\sqrt{145}>0.
\]
Hence \(\Delta_4(\theta)>0\) for all \(\theta\in\mathbb R\).

Finally, the full determinant is independent of \(\theta\), and equals
\[
\Delta_5(\theta)
=
\frac{g(s)}{65536}=0
\]
where the latter equality holds by the defining equation \eqref{eq:def-s-clean}.

Thus, for every \(\theta\),
\[
\Delta_1(\theta)>0,\qquad
\Delta_2(\theta)>0,\qquad
\Delta_3(\theta)>0,\qquad
\Delta_4(\theta)>0,\qquad
\Delta_5(\theta)=0.
\]
That immediately implies 
\[
W(A)=\overline{\mathbb D}_{\omega}.
\]

Finally, a direct expansion gives
\begin{align*}
Q_C(x,y)
&=
1-5xy+\left(\frac{75}{8}-2p^2\right)x^2y^2
+\frac p{16}(x^3y^2+x^2y^3)\\
&\quad
+\left(\frac{91}{16}p^2-\frac{2089}{256}\right)x^3y^3
-\frac{19p}{128}(x^4y^3+x^3y^4)
+\left(p^4-\frac{313}{64}p^2+\frac{1683}{512}\right)x^4y^4\\
&\quad
+\frac{27p}{512}(x^5y^4+x^4y^5)
+\left(-\frac34p^4+\frac{81}{64}p^2-\frac{2025}{4096}\right)x^5y^5.
\end{align*}
In particular, the coefficient of \(x^3y^2\) is
\[
q_{32}=\frac p{16}\neq 0.
\]
Therefore \(Q_C(x,y)\) cannot be a polynomial in \(xy\) alone.

By Proposition~\ref{prop:bridge}, this means that \(A\) does not have Circularity property, even though \(W(A)\) is a disk.
\end{proof}

\begin{rem}
Numerically,
\[
s\approx 0.2466,\qquad
p\approx 0.4965,\qquad
\omega\approx 0.6964.
\]
 We plot the Kippenhahn curve of $\mathcal{A}(C)$ using numerical approximation, see Figure~\ref{fig:no_circ_prop_ex}

\end{rem}

\begin{figure}[H]
   
    \includegraphics[width=0.9\linewidth]{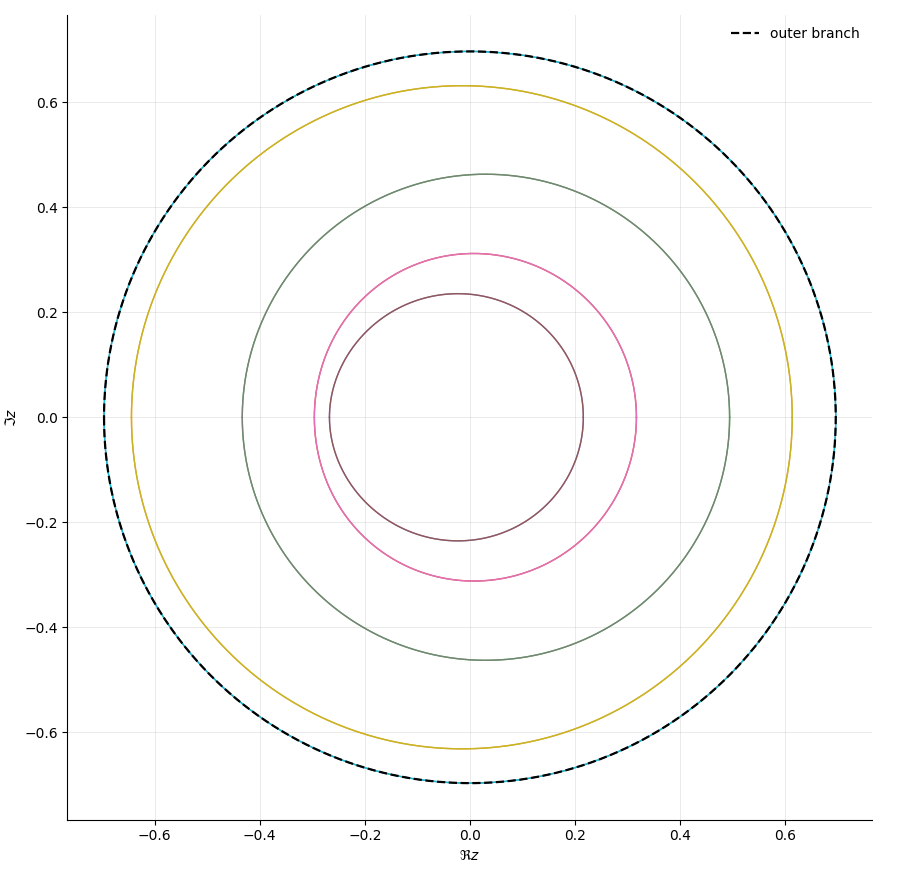}
    \caption{The Kippenhahn curve of a nilpotent partial isometry with circular numerical range without Circularity property. The outer branch is a circle, hence the numerical range is a disc.}
    \label{fig:no_circ_prop_ex}
\end{figure}
\begin{rem}
    
We did not find explicit examples of unitarily irreducible nilpotent partial isometries of sizes $7$, $8$ and $9$ with circular numerical ranges, so that they do not have Circularity property. Despite that, our numerical experiments suggest that there are such examples.

\end{rem}

\section{Computable criteria for rotational invariance}\label{sec:overlap}
The approaches we used to construct our examples in previous sections required either computing various traces of matrix polynomials or determinants of matrix polynomials. Say, proving that the operator from Example~\ref{ex:tower-counterexample} is not rotationally invariant required us to find an imbalanced word in $C$ and $C^*$ with non-zero trace. Such a search may become impractical even for operators of moderate size.
This section addresses the practical problem of efficiently detecting that a circular matrix is \emph{not} rotationally invariant.
 We recall the following structural result due to Li and Tsing \cite{LiTsing}:

 \begin{thm}\label{thm:rot_inv_struct}
     A unitarily irreducible $n$-by-$n$ matrix $X$ is rotationally invariant if and only if it is unitarily similar to the matrix of the form
     $$\begin{bmatrix}
         0 & X_1 & & \\
           &  0  & \ddots & \\
           &     & \ddots & X_{k-1} \\
           & &  & 0 \\
     \end{bmatrix}$$
  \end{thm}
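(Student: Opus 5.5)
The sufficiency direction is immediate, and the necessity direction goes through the infinitesimal generator of the rotations.

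\emph{Sufficiency.} Suppose $X$ is unitarily similar to the block shift with superdiagonal blocks $X_1,\dots,X_{k-1}$ and blocks of sizes $n_1,\dots,n_k$. Set
$$U_\theta=\diag\bigl(\Id_{n_1},e^{i\theta}\Id_{n_2},\dots,e^{i(k-1)\theta}\Id_{n_k}\bigr).$$
The same block computation as in the proof of Proposition~\ref{prop:tower-det} gives $U_\theta X U_\theta^*=e^{-i\theta}X$. Hence $X$ is unitarily similar to $e^{i\theta}X$ for every $\theta$.

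\emph{Necessity.} Assume $X$ is unitarily irreducible and rotationally invariant.

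First, the spectrum is invariant under all rotations and finite, so $\sigma(X)=\{0\}$.

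Next, consider the set $G=\{(U,\theta): UXU^*=e^{i\theta}X\}$. It is a closed subgroup of $U(n)\times\R/2\pi\mathbb Z$, so it is a compact Lie group. Its projection onto the circle is surjective by hypothesis. Since a compact Lie group has finitely many components, the identity component of $G$ also surjects onto the circle. Its Lie algebra therefore contains an element $(iH,1)$ with $H$ Hermitian. Differentiating $e^{itH}Xe^{-itH}=e^{it}X$ at $t=0$ gives the commutation relation
$$[H,X]=X.$$

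Diagonalize $H$ and write $\C^n=\bigoplus_\lambda E_\lambda$ as the sum of its eigenspaces. If $Hv=\lambda v$, then $HXv=XHv+Xv=(\lambda+1)Xv$. So $X$ maps $E_\lambda$ into $E_{\lambda+1}$, and $X^*$ maps $E_{\lambda+1}$ into $E_\lambda$.

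Group the eigenvalues of $H$ into classes modulo $\mathbb Z$. For each class, the sum of the corresponding eigenspaces is invariant under both $X$ and $X^*$, hence reducing for $X$. By irreducibility there is exactly one class.

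Within that class, suppose the integer-translate chain $\lambda_0,\lambda_0+1,\dots$ has a gap: some $\lambda_0+j$ is not an eigenvalue while eigenvalues occur on both sides. Then the sums of eigenspaces on either side of the gap are again reducing subspaces. So irreducibility forces the eigenvalues to be exactly $\lambda_0,\lambda_0+1,\dots,\lambda_0+k-1$, all consecutive.

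Order an orthonormal basis adapted to $E_{\lambda_0},\dots,E_{\lambda_0+k-1}$. In this basis $X$ has nonzero blocks only in positions $(j,j+1)$, up to choosing the block ordering so that $X$ maps block $j+1$ to block $j$, or equivalently reversing the order. This is exactly the stated form. A unitary change of basis realizes it, which finishes the proof.

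\emph{Main obstacle.} The main obstacle is the Lie-theoretic step producing $H$. It needs a continuous one-parameter family of implementing unitaries, whereas the hypothesis gives only pointwise existence of some $U_\theta$ for each $\theta$. The argument via compactness and finitely many components must be handled carefully; note also that the kernel of the projection is the commutant's unitary group, which is $\mathbb T$ by irreducibility.

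If this step proves delicate, the fallback is a single irrational rotation. Pick $\theta\notin 2\pi\mathbb Q$ with $UXU^*=e^{i\theta}X$ and diagonalize $U$ directly. From $UX=e^{i\theta}XU$ we get that $X$ maps the $\mu$-eigenspace of $U$ into the $e^{-i\theta}\mu$-eigenspace. The eigenvalues of $U$ then fall into finite orbit pieces under multiplication by $e^{-i\theta}$; these play the role of the eigenvalues of $H$, and the same reducing-subspace argument applies, while irrationality guarantees that the chains never wrap around.
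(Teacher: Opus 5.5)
Your proof is correct, and it supplies something the paper does not. The paper proves only sufficiency, via Proposition~\ref{prop:gauge}, using the same diagonal gauge $U_\theta$ that you use. For necessity it simply cites Li and Tsing \cite{LiTsing}.

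The Lie-theoretic step you flag as the main obstacle is already handled by what you wrote:
\begin{itemize}
\item $G$ is compact, so $G/G_0$ is finite.
\item The image of $G_0$ in the circle is a connected closed subgroup, so it is trivial or the whole circle. It cannot be trivial, since the image of $G$ would then be finite.
\item A surjective homomorphism of connected Lie groups has surjective differential (otherwise it would be constant on $G_0$). Hence some $(iH,1)$ lies in the Lie algebra of $G$, and $[H,X]=X$ follows.
\end{itemize}
This is exactly the passage from pointwise implementers $U_\theta$ to a one-parameter family, so nothing is missing.

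Your fallback is also valid, and it is the more economical route. It needs only the spectral decomposition of a single unitary, and it proves more: for irreducible $X$, one angle $\theta\notin 2\pi\mathbb Q$ with $X$ unitarily similar to $e^{i\theta}X$ already forces the block-shift form. Full rotational invariance then follows from the sufficiency direction. Irrationality is used precisely to make $j\mapsto e^{ij\theta}\mu$ injective. The eigenvalues of $U$ in the single surviving orbit are then indexed by a finite subset of $\mathbb Z$, your gap argument applies verbatim, and the chain cannot wrap around.

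What the Lie route buys is a canonical grading operator $H$ whose eigenspaces are the summands $H_i$, matching the generator picture behind $U_\theta$. The price is invoking the closed-subgroup theorem.

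There is one harmless sign slip in the fallback. From $UXU^*=e^{i\theta}X$ and $Uv=\mu v$ you get $UXv=e^{i\theta}\mu\,Xv$. So $X$ maps the $\mu$-eigenspace of $U$ into the $e^{i\theta}\mu$-eigenspace, not the $e^{-i\theta}\mu$-eigenspace.
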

That is, there is an orthogonal decomposition 
$$
\C^n=H_1\oplus\cdots\oplus H_k.
$$
such that $X(H_i) \subset H_{i-1}$ for $i \geq 2$ and $X(H_1) = \left\{0\right\}$. 
In what follows we will use the proof of sufficiency, which we recall here.
\begin{prop}\label{prop:gauge}
Assume that $X$ is given as just described.

Define
$$
U_\theta:=\diag\bigl(\Id_{H_1},e^{i\theta}\Id_{H_2},\dots,e^{i(k-1)\theta}\Id_{H_k}\bigr).
$$
Then
$$
U_\theta^*XU_\theta=e^{i\theta}X,
\qquad
H_X(\theta)=U_\theta^*H_X(0)U_\theta.
$$
\end{prop}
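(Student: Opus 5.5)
The plan is a direct block computation with respect to the orthogonal decomposition $\C^n=H_1\oplus\cdots\oplus H_k$ from Theorem~\ref{thm:rot_inv_struct}. In that decomposition $X$ is block upper-triangular with its only nonzero blocks $X_{i-1}$ in the position $(i-1,i)$, for $2\le i\le k$, because $X(H_i)\subset H_{i-1}$ and $X(H_1)=\{0\}$. The unitary $U_\theta$ acts on $H_i$ as the scalar $e^{i(i-1)\theta}$, so conjugation by $U_\theta$ only rescales each block of $X$ by a phase.

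\textbf{First identity.} The $(i-1,i)$ block of $U_\theta^*XU_\theta$ is
$$
e^{-i(i-2)\theta}\,X_{i-1}\,e^{i(i-1)\theta}=e^{i\theta}X_{i-1}.
$$
Every other block of $X$ is zero and stays zero. Hence $U_\theta^*XU_\theta=e^{i\theta}X$. Taking adjoints gives $U_\theta^*X^*U_\theta=e^{-i\theta}X^*$.

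\textbf{Second identity.} Since conjugation by a unitary is linear and commutes with taking adjoints, I will substitute both relations into $H_X(0)=\tfrac12(X+X^*)$:
$$
U_\theta^*H_X(0)U_\theta=\tfrac12\bigl(e^{i\theta}X+e^{-i\theta}X^*\bigr).
$$
This is the Hermitian pencil of $X$ at angle $\theta$, namely the real part of $X$ rotated by the phase $e^{i\theta}$. That is the sense in which the proposition writes $H_X(\theta)=U_\theta^*H_X(0)U_\theta$.

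\textbf{Main obstacle: the sign of the angle.} With the normalization $H_X(\theta)=\Ree(e^{-i\theta}X)$ fixed in the introduction, the last display is literally $H_X(-\theta)$. So the step needing care is matching it with the printed formula. I will handle this by observing that $\theta\mapsto U_\theta$ is a unitary one-parameter group with $U_\theta^*=U_{-\theta}$. Running the same computation with $U_{-\theta}$ in place of $U_\theta$ gives the identity with the opposite phase. Consequently, for every $\theta$, $H_X(\theta)$ is unitarily similar to $H_X(0)$ via $U_{\pm\theta}$.

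This is the content actually used afterwards: the spectrum of $H_X(\theta)$ is independent of $\theta$, which is Circularity property. It is also the same gauge that was used in the proof of Proposition~\ref{prop:tower-det}. I will state the equality with the phase direction made explicit as above, so that the printed formula is read in the orientation for which the computation holds.
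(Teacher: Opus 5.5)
Your block computation is correct. It is exactly the direct verification the paper leaves implicit: Proposition~\ref{prop:gauge} is stated without proof, as the sufficiency half of Theorem~\ref{thm:rot_inv_struct}, and uses the same gauge as Proposition~\ref{prop:tower-det}. Your diagnosis of the orientation is also right. From $U_\theta^*XU_\theta=e^{i\theta}X$ and $H_X(\theta)=\Ree(e^{-i\theta}X)$ one gets $U_\theta^*H_X(0)U_\theta=H_X(-\theta)$. So the second identity must read $H_X(\theta)=U_\theta H_X(0)U_\theta^*=U_{-\theta}^*H_X(0)U_{-\theta}$, and as printed it genuinely fails, already for the $2\times2$ Jordan block whenever $\theta\notin\pi\mathbb{Z}$.

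State this exact formula rather than reinterpreting the notation $H_X(\theta)$ or writing ``via $U_{\pm\theta}$''. Also, more than isospectrality is used later, contrary to your closing remark. The proof of Theorem~\ref{thm:gauwu_poly} transports eigenvectors by the gauge, so there the branch should be $U_\theta x^{(j)}(0)$. This at most replaces $p_j(e^{i(\phi-\theta)})$ by its complex conjugate and leaves Corollary~\ref{cor:magnitude} unaffected.
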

Note that $H_X(\theta) = \re(e^{-i\theta}X)$ depends on $\theta$ analytically. It is well known that corresponding eigenvalues 
$$ \lambda_1(\theta), \, \lambda_2(\theta), \, \ldots, \, \lambda_n(\theta)$$ and unit eigenvectors 
$$y_1(\theta), \, y_2(\theta), \, \ldots, \, y_n(\theta)$$
may be chosen so that they depend analytically on $\theta$ as well. 
We refer the reader to \cite{GaSe12} for the details. In the case when $X$ is rotationally invariant, $H_X(\theta)$ are isospectral for every $\theta$. That is, $\lambda_i(\theta) \equiv \lambda_i(0)$. Until the end of this section we denote $\lambda_i = \lambda_i(\theta)$ and assume
$$\lambda_1 \geq \lambda_2 \geq \ldots \geq \lambda_n.$$

The following Theorem is largely inspired by work of Gau and Wu \cite{WangWu11}, where they apply the same logic to study weighted-shifts. What follows is an immediate generalization to block-shifts.

\begin{thm}\label{thm:gauwu_poly}
Assume that $X$ is rotationally invariant. Let $\lambda_j$ be an eigenvalue of $\re(\exp(-i\theta) X)$ and let $y^{(j)}(\theta)$ be the corresponding unit eigenvector, chosen to be analytic in $\theta$. Assume in addition that $\lambda_j$ is simple. Then there is a polynomial
$$
p_j(z)=\sum_{m=1}^k c_m^{(j)}z^{m-1},
\qquad c_m^{(j)}\ge 0,
\qquad \sum_{m=1}^k c_m^{(j)}=1,
$$
and a real-analytic function $\psi$ such that 
$$
\inn{y^{(j)}(\theta)}{y^{(j)}(\phi)}
=
p_j\bigl(e^{i(\phi-\theta)}\bigr)e^{i(\psi(\phi)-\psi(\theta))}
\qquad (\theta,\phi\in\R).
$$
\end{thm}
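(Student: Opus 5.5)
We use the gauge symmetry of Proposition~\ref{prop:gauge} to transport an eigenvector at angle $0$ to every other angle; the inner product then becomes a block-weighted average of powers of $e^{i(\phi-\theta)}$.

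Since $X$ is rotationally invariant and (as throughout Theorem~\ref{thm:rot_inv_struct}) we may assume it is unitarily irreducible, we pass, after a unitary similarity, to the block-shift form with decomposition $\C^n=H_1\oplus\cdots\oplus H_k$. Unitary similarity preserves inner products of correspondingly transformed eigenvectors, so nothing is lost. Proposition~\ref{prop:gauge} gives $H_X(\theta)=U_\theta^*H_X(0)U_\theta$.

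Fix a unit eigenvector $y_0$ of $H_X(0)$ for the eigenvalue $\lambda_j$. Then $U_\theta^*y_0$ is a unit eigenvector of $H_X(\theta)$ for the same eigenvalue $\lambda_j$, because the spectrum is constant. The eigenvalue is simple, so the eigenspace is one-dimensional, and the analytic unit eigenvector must satisfy
$$y^{(j)}(\theta)=e^{i\psi(\theta)}U_\theta^*y_0$$
for some scalar phase $e^{i\psi(\theta)}$. This phase equals $\inn{y^{(j)}(\theta)}{U_\theta^*y_0}$, which is real-analytic in $\theta$ and has modulus one. Hence, locally and then globally on $\R$ by simple connectivity, it admits an analytic logarithm $i\psi(\theta)$ with $\psi$ real-analytic. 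This lifting step is the only mildly delicate point; simplicity of $\lambda_j$ is exactly what makes it work. Without simplicity the eigenvector could rotate inside a multidimensional eigenspace, and no scalar phase would suffice.

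It remains to compute the inner product, taking it linear in the first argument. Write $y_0=\sum_m v_m$ with $v_m\in H_m$, and set $c_m^{(j)}:=\norm{v_m}^2\ge0$. Then $\sum_m c_m^{(j)}=\norm{y_0}^2=1$. Since $U_\theta^*$ acts on $H_m$ as the scalar $e^{-i(m-1)\theta}$,
$$\inn{U_\theta^*y_0}{U_\phi^*y_0}=\sum_{m=1}^k e^{-i(m-1)\theta}e^{i(m-1)\phi}\norm{v_m}^2=p_j\bigl(e^{i(\phi-\theta)}\bigr).$$
Inserting the phases gives the factor $e^{i\psi(\theta)}\overline{e^{i\psi(\phi)}}$.

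With the opposite inner-product convention, or equivalently after replacing $\psi$ by $-\psi$, this factor becomes $e^{i(\psi(\phi)-\psi(\theta))}$ as stated. We will therefore redefine $\psi$ with the appropriate sign, so that the formula matches the statement exactly; this is a pure bookkeeping choice. The positivity and normalization of the $c_m^{(j)}$ are automatic from the construction.
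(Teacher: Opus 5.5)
Your proof is correct and is essentially the paper's argument. You transport a fixed eigenvector at $\theta=0$ by the gauge unitary $U_\theta^*$ from Proposition~\ref{prop:gauge}, set $c_m^{(j)}=\norm{v_m}^2$, and absorb the remaining freedom into a unimodular analytic phase; your treatment of the phase lift and of the sign of $\psi$ is more careful than the paper's. One aside is inaccurate: switching to the inner product that is linear in the second argument would also turn $p_j(e^{i(\phi-\theta)})$ into $p_j(e^{i(\theta-\phi)})$, so it is not equivalent to replacing $\psi$ by $-\psi$, though the latter fix, which is the one you actually use, is correct.
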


\begin{proof}
Fix a simple eigenvalue branch of $H_X(\theta)$ and choose a unit eigenvector $x^{(j)}(0)$ at $\theta=0$. Write
$$
x^{(j)}(0)=x_1^{(j)}\oplus\cdots\oplus x_k^{(j)},
\qquad x_m^{(j)}\in H_m,
$$
and define the coefficients
$$
c_m^{(j)}:=\norm{x_m^{(j)}}^2,
\qquad m=1,\dots,k.
$$
Then $c_m^{(j)}\ge 0$ and $\sum_m c_m^{(j)}=1$.
Consider $x^{(j)}(\theta)=U_\theta^*x^{(j)}(0)$. By Proposition~\ref{prop:gauge}, this is an eigenvector of $H_X(\theta)$ that corresponds to $\lambda_j$ and depends on $\theta$ analytically. Then
\begin{align*}
\inn{x^{(j)}(\theta)}{x^{(j)}(\phi)}
&=\inn{x^{(j)}(0)}{U_{\phi-\theta}^*x^{(j)}(0)}\\
&=\sum_{m=1}^k e^{i(m-1)(\phi-\theta)}\norm{x_m^{(j)}}^2
=\sum_{m=1}^k c_m^{(j)}e^{i(m-1)(\phi-\theta)},
\end{align*}
which is the asserted formula with $\psi \equiv const$. Every other possible choice of $y^{(j)}$ is obtained from the one we just described by pointwise multiplication by an analytic function, that is $y^{(j)}(\theta) = e^{i\psi(\theta)}x^{(j)}(\theta)$, which gives the asserted formula.
\end{proof}

\begin{defi} \label{defi:simple_eig_branch}
    In the situation of Theorem~\ref{thm:gauwu_poly}, we call $y^{(j)}$ a simple eigenvector branch. 
\end{defi}

Thus, the scalar product of two eigenvectors corresponding to the same eigenvalue but to different angles is, up to multiplication by a unimodular scalar, is determined by the values of a certain polynomial.

To eliminate that unit complex number and simplify things a bit, one may take an absolute value.
In particular, the following holds.
\begin{cor}\label{cor:magnitude}
Assume that $X$ is rotationally invariant and let $y(\theta)$ be a simple eigenvector branch. Then
the quantity
$$
\abs{\inn{y(\theta)}{y(\theta + \delta)}}
$$
depends only on $\delta$.
\end{cor}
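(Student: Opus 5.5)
This is a direct consequence of Theorem~\ref{thm:gauwu_poly}, so the plan is to take absolute values in its formula. Since $X$ is rotationally invariant and $y(\theta)$ is a simple eigenvector branch (Definition~\ref{defi:simple_eig_branch}), Theorem~\ref{thm:gauwu_poly} provides a polynomial $p$ with nonnegative coefficients summing to one and a real-analytic function $\psi$ such that
$$
\inn{y(\theta)}{y(\phi)}=p\bigl(e^{i(\phi-\theta)}\bigr)e^{i(\psi(\phi)-\psi(\theta))}
$$
for all real $\theta,\phi$.

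First, I substitute $\phi=\theta+\delta$. This turns the identity into
$$
\inn{y(\theta)}{y(\theta+\delta)}=p\bigl(e^{i\delta}\bigr)e^{i(\psi(\theta+\delta)-\psi(\theta))}.
$$

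Next, I take absolute values. The phase factor has modulus one because $\psi$ is real-valued, so
$$
\abs{\inn{y(\theta)}{y(\theta+\delta)}}=\abs{p(e^{i\delta})}.
$$
The right-hand side does not involve $\theta$, which is exactly the claim.

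The only point needing care is that the function $\psi$ produced by Theorem~\ref{thm:gauwu_poly} is real, so that the factor really is unimodular. This holds because any two analytic unit eigenvector branches for the same simple eigenvalue differ by a pointwise unimodular analytic factor $e^{i\psi(\theta)}$, and that is how $\psi$ arises in the proof of the theorem. Beyond this there is no obstacle. As a side remark, the value $\abs{p(e^{i\delta})}$ is independent of which simple branch is chosen, for the same reason.
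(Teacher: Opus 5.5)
Your proposal is correct and is exactly the paper's argument: the paper obtains the corollary by setting $\phi=\theta+\delta$ in Theorem~\ref{thm:gauwu_poly} and taking absolute values, which removes the unimodular factor $e^{i(\psi(\theta+\delta)-\psi(\theta))}$ and leaves $\abs{p(e^{i\delta})}$. Your justification that $\psi$ is real matches the paper's proof of the theorem, where two unit eigenvector branches for a simple eigenvalue are related by $y(\theta)=e^{i\psi(\theta)}x(\theta)$.
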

One may use it as a computationally inexpensive practical test for rotational invariance of an operator $X$ in the following way (of course, details may vary):
\begin{enumerate}
    \item Take a step $\delta$ and two different values $\theta_1$ and $\theta_2$.
    \item Find (not necessarily) largest eigenpairs for each of corresponding real parts $H_X(\theta_1), H_X(\theta_1 + \delta)$ and $H_X(\theta_2), H_X(\theta_2 + \delta)$. Denote obtained eigenvectors as $y_1(\theta_1), y_1(\theta_1+\delta)$ and $y_2(\theta_2), y_2(\theta_2+\delta)$, respectively.
    \item If obtained eigenvalues are different, then the operator $X$ does not have Circularity property, hence it is not rotationally invariant.
    \item If obtained eigenvalues are simple, check if $$\abs{\inn{y_1(\theta_1)}{y_1(\theta_1 + \delta)}} = \abs{\inn{y_2(\theta_2)}{y_2(\theta_2 + \delta)}}.$$ If not, then the operator $X$ is not rotationally invariant.
\end{enumerate}
We illustrate this approach by presenting a sequence of matrices in every even dimension that have Circularity property and yet are not rotationally invariant.

For $n\ge 4$ define
$$
r_n:=\left\lfloor\frac{n-2}{2}\right\rfloor,
\qquad
s_n:=n-1-r_n,
$$
and consider the family
\begin{equation}\label{eq:centered-family}
C_n(1,1)=J_n+E_{r_n,r_n+2}-E_{s_n,s_n+2}.
\end{equation}
The first few members are
$$
C_4(1,1)=
\begin{bmatrix}
0&1&1&0\\
0&0&1&-1\\
0&0&0&1\\
0&0&0&0
\end{bmatrix},
\qquad
C_5(1,1)=
\begin{bmatrix}
0&1&1&0&0\\
0&0&1&0&0\\
0&0&0&1&-1\\
0&0&0&0&1\\
0&0&0&0&0
\end{bmatrix}.
$$
\[
C_6(1,1)=
\begin{bmatrix}
0&1&0&0&0&0\\
0&0&1&1&0&0\\
0&0&0&1&-1&0\\
0&0&0&0&1&0\\
0&0&0&0&0&1\\
0&0&0&0&0&0
\end{bmatrix}.
\]

\begin{thm}\label{thm:circular-family}
For every $m\ge 2$, the matrix $C_{2m}(1,1)$ has Circularity property.
More precisely, 
$$\det\!\bigl(uI-H_{C_{2m}}(\theta) \bigr)
=
2^{-2m}
\bigl(U_m(u)-U_{m-1}(u)-U_{m-2}(u)\bigr)
\bigl(U_m(u)+U_{m-1}(u)-U_{m-2}(u)\bigr),$$

where \(U_k\) denotes the Chebyshev polynomial of the second kind.
\end{thm}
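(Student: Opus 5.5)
The plan is to gauge away the Jordan part exactly as in the proof of Proposition~\ref{prop:tower-det}, and then check by a permutation expansion that the two off-band entries contribute no net $\theta$-dependence. Set $n=2m$. Then $r_n=m-1$ and $s_n=m$, so
$$C_{2m}(1,1)=J_{2m}+E_{m-1,m+1}-E_{m,m+2}.$$

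\textbf{Step 1: gauge.} Conjugate by $U_\theta=\diag(1,e^{i\theta},\dots,e^{i(2m-1)\theta})$.
\begin{itemize}
\item The superdiagonal entries of $e^{-i\theta}C$ become $\theta$-free.
\item The two entries on the second superdiagonal acquire the phase $e^{-i\theta}e^{-i(j-1)\theta}e^{i(j+1)\theta}=e^{i\theta}$.
\end{itemize}
Hence $2H_C(\theta)$ is unitarily similar to
$$M(\theta)=T+e^{i\theta}N+e^{-i\theta}N^{T},\qquad T=J+J^{T},\qquad N=E_{m-1,m+1}-E_{m,m+2}.$$
It therefore suffices to show that $\det(2uI-M(\theta))$ does not depend on $\theta$.

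\textbf{Step 2: expand over permutations.} Each permutation contributing to the determinant is a product of cycles. Those cycles are either transpositions along the tridiagonal band, or "long" cycles that use an entry of $N$ or $N^T$.
\begin{itemize}
\item \emph{No $e^{\pm2i\theta}$ terms.} A term carrying $e^{2i\theta}$ would need both forward jumps $m-1\to m+1$ and $m\to m+2$. The images of $m+1,m+2$ would then have to be $\{m-1,m\}$ using only band entries. This is impossible, since $m+2$ can only map to $m+1$ or $m+3$.
\item \emph{Single $e^{i\theta}$ terms.} The only ones are the 3-cycles $(m-1,m+1,m)$ and $(m,m+2,m+1)$, which carry opposite signs because of the $\pm1$ in $N$. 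Their complementary minors are products of tridiagonal determinants on the two remaining intervals:
\begin{itemize}
\item for the first cycle, $\{1,\dots,m-2\}$ and $\{m+2,\dots,2m\}$, giving $\phi_{m-2}\phi_{m-1}$;
\item for the second cycle, $\{1,\dots,m-1\}$ and $\{m+3,\dots,2m\}$, giving $\phi_{m-1}\phi_{m-2}$.
\end{itemize}
These coincide, so the two $e^{i\theta}$ contributions cancel. This is exactly where the centered placement $r_n,s_n$ with $n$ even is used.
\item \emph{$e^{-i\theta}$ terms.} These are handled in the same way, by transposition.
\item \emph{Mixed terms.} Any term using one $N$ and one $N^T$ entry is $\theta$-free.
\end{itemize}
I expect the main obstacle to be this bookkeeping, since one must confirm there are no other cycles through both jumps. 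I would organize it as a Laplace expansion along rows $m-1,\dots,m+2$, so that only a $4\times4$ central block interacts with the two tridiagonal tails.

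\textbf{Step 3: evaluate at $\theta=0$.} Once $\theta$-independence is established, it remains to compute $\det(uI-H(0))$. Let $R$ be the reversal $i\mapsto 2m+1-i$. It maps $E_{m-1,m+1}$ to $E_{m,m+2}$, so $M(0)=T+N+N^T$ is persymmetric, that is, $RM(0)R=M(0)$. Splitting $\C^{2m}$ into the $\pm1$ eigenspaces of $R$ (symmetric and antisymmetric vectors) block-diagonalizes $M(0)$ into two $m\times m$ blocks. Each block is tridiagonal, with corner modifications coming from the folded middle and from $N$.

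Each block determinant is then computed by the Chebyshev recurrence \eqref{eq:phi_rec} with $z=2u$ and $r=1$, expanding along the last (central) row. This should yield $U_m(u)\mp U_{m-1}(u)-U_{m-2}(u)$ in the two cases. The prefactor $2^{-2m}$ comes from passing from $\det(2uI-2H)$ to $\det(uI-H)$.

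As a sanity check, I would verify the case $m=2$ directly on $C_4(1,1)$.
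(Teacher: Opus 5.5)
The gap is in Step 3. The matrix $M(0)$ is \emph{not} invariant under the reversal $R$. Reversal sends $E_{m-1,m+1}\mapsto E_{m+2,m}$ and $E_{m,m+2}\mapsto E_{m+1,m-1}$, so $RNR=-N^{T}$. Hence
\[
RM(\theta)R=M(\pi-\theta),
\]
and in particular $RM(0)R=M(\pi)\neq M(0)$. Concretely, the $(m-1,m+1)$ entry of $M(0)$ is $+1$, while its $(m+2,m)$ entry is $-1$. The opposite signs in $N$ that make your Step 2 cancellation work are exactly what breaks this symmetry at $\theta=0$. For example, when $m=2$ the vector $(1,1,1,0)^{T}$ spans the eigenspace of $M(0)$ for the simple eigenvalue $2$, and it is neither symmetric nor antisymmetric. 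So the block diagonalization, and with it the explicit formula, does not follow as written.

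The repair is easy once Step 2 is in hand: evaluate at the fixed point $\theta=\pi/2$ instead, where $M(\pi/2)=T+i(N-N^{T})$ does commute with $R$.
\begin{itemize}
\item On $R$-symmetric vectors, $M(\pi/2)$ acts as the $m\times m$ Jacobi matrix with unit off-diagonal entries, except that the $(m-1,m)$ entry is $1+i$ and the $(m,m-1)$ entry is $1-i$. Its only nonzero diagonal entry is the $(m,m)$ entry, equal to $1$.
\item On antisymmetric vectors, the same three entries are $1-i$, $1+i$ and $-1$.
\end{itemize}
Expanding $\det(2uI-\cdot)$ along the last row gives
\[
(2u\mp1)U_{m-1}(u)-2U_{m-2}(u)=U_m(u)\mp U_{m-1}(u)-U_{m-2}(u),
\]
which are exactly the two factors in the statement.

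With that fix, your route is genuinely different from the paper's. The paper takes Schur complements of the two tail blocks $T_{m-2}(u)$. This reduces everything to a central $4\times4$ determinant $4a^2u^2-a^2-8au+4$ with $a=U_{m-1}/U_{m-2}$, which is visibly $\theta$-free and is then factored algebraically. Your $3$-cycle cancellation shows where the $\theta$-independence comes from. The corrected symmetry argument shows why the answer splits into the two $R$-eigenspace factors.

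One bookkeeping point in Step 2: the $e^{i\theta}$ coefficient could a priori also receive terms using both $N$-entries and one $N^{T}$-entry. These do not occur. If both $m-1\to m+1$ and $m\to m+2$ are used, two indices leave $\{1,\dots,m\}$, so two must re-enter it. The only entries available for that are $(m+1,m)$, $(m+1,m-1)$ and $(m+2,m)$, which forces both $N^{T}$-entries and hence net phase $1$. The same count makes your claim that there are no $e^{2i\theta}$ terms rigorous; your remark about the images of $m+1,m+2$ does not justify it as stated.
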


\begin{proof}
Let $M_\theta(u):= 2uI-H_{C_{2m}}(\theta)$
and 
$D_\theta:=\operatorname{diag}(1,e^{-i\theta},e^{-2i\theta},\dots,e^{-(2m-1)i\theta})$, then

$$
\det M_\theta(u)=\det D_\theta^*M_\theta(u)D_\theta,
$$
A direct computation shows that
\[
D_\theta^*M_\theta(u)D_\theta
=
T_{2m}(u)
-e^{-i\theta}E_{m-1,m+1}+e^{-i\theta}E_{m,m+2}
-e^{i\theta} E_{m+1,m-1}+e^{i\theta} E_{m+2,m},
\]
where \(T_{2m}(u)\) is the Toeplitz tridiagonal matrix
\[
T_{2m}(u)=
\begin{bmatrix}
2u & -1 \\
-1 & 2u & -1 \\
& \ddots & \ddots & \ddots \\
&& -1 & 2u & -1\\
&&& -1 & 2u
\end{bmatrix}.
\]

For \(r\ge 0\), it holds that
$$\det T_r(u) = U_r(u). $$

Now partition the index set \(\{1,\dots,2m\}\) into
\[
L=\{1,\dots,m-2\},\qquad
C=\{m-1,m,m+1,m+2\},\qquad
R=\{m+3,\dots,2m\}.
\]
Relative to this decomposition, \(N_\theta(u)\) has block form
\[
N_\theta(u)=
\begin{bmatrix}
T_{m-2}(u) & B_L & 0\\
B_L^* & K_\theta(u) & B_R^*\\
0 & B_R & T_{m-2}(u)
\end{bmatrix},
\]
where \[
B_L=-E_{m-2,\,1}\in M_{m-2,\,4},
\qquad
B_R=-E_{1,\,4}\in M_{m-2,\,4},
\]
\[
K_\theta(u)=
\begin{bmatrix}
2u & -1 & -e^{-i\theta} & 0\\
-1 & 2u & -1 & e^{-i\theta}\\
-e^{i\theta}& -1 & 2u & -1\\
0 & e^{i\theta} & -1 & 2u
\end{bmatrix}.
\]

Assume first that \(U_{m-2}(u)\neq 0\). Taking the Schur complement of the two tail blocks \(T_{m-2}(u)\), we obtain
\[
\det N_\theta(u)=U_{m-2}(u)^2\det \widetilde K_\theta(u),
\]
where
\[
\widetilde K_\theta(u)=
\begin{bmatrix}
a & -1 & -e^{-i\theta} & 0\\
-1 & 2u & -1 & e^{-i\theta}\\
-e^{i\theta} & -1 & 2u & -1\\
0 & e^{i\theta} & -1 & a
\end{bmatrix},
\qquad
a=\frac{U_{m-1}(u)}{U_{m-2}(u)}.
\]

A direct \(4\times 4\) determinant computation gives
\[
\det \widetilde K_\theta(u)
=
4a^2u^2-a^2-8au+4.
\]
 Thus
\[
\det N_\theta(u)
=
U_{m-2}(u)^2\bigl(4a^2u^2-a^2-8au+4\bigr).
\]
Substituting \(a=U_{m-1}/U_{m-2}\), we get
\[
\det N_\theta(u)
=
4u^2U_{m-1}(u)^2-U_{m-1}(u)^2-8uU_{m-1}(u)U_{m-2}(u)+4U_{m-2}(u)^2.
\]
Finally, using
\[
U_m(u)=2uU_{m-1}(u)-U_{m-2}(u),
\]
we factor this as
\[
\det N_\theta(u)
=
\bigl(U_m(u)-U_{m-1}(u)-U_{m-2}(u)\bigr)
\bigl(U_m(u)+U_{m-1}(u)-U_{m-2}(u)\bigr).
\]
\end{proof}

We compute largest eigenpairs of $H_{C_{2m}}(\theta)$ for three different values of $\theta$, then compute their scalar product and apply Corollary~\ref{cor:magnitude}.

\begin{prop}\label{prop:three-largest-eigenpairs-even-compact}
Let
\[
C=C_{2m}(1,1)=J_{2m}+E_{m-1,m+1}-E_{m,m+2},
\qquad m\ge 2.
\]
Define
\[
P_m(x):=U_m(x)-U_{m-1}(x)-U_{m-2}(x).
\]
Let \(\lambda_m\) be the largest root of \(P_m\). Then \(\lambda_m\) is the largest eigenvalue of
\[
H_C(0)=\Re C,\qquad H_C(-\pi/4) =\Re(e^{i\pi/4}C),\qquad H_C(-\pi/2)=\Re(iC)=-\Im C.
\]

Moreover, corresponding eigenvectors may be chosen as follows.

\smallskip

\noindent
\textup{(i) For \(H_C(0)\)} the eigenvector is given by \(v_0=(v_0(1), v_0(2), \ldots, v_0(2m))\), where
\[
v_0(k)=
\begin{cases}
U_{k-1}(\lambda_m), & 1\le k\le m-1,\\[1mm]
\dfrac12\,U_{m-1}(\lambda_m), & k=m,m+1,\\[1mm]
0, & m+2\le k\le 2m.
\end{cases}
\]

\smallskip

\noindent
\textup{(ii) For \(H_C(-\pi/4)\):} let
\[
\eta:=e^{-i\pi/4},\qquad
\alpha:=\frac{1}{1+\eta},\qquad
q:=\frac{1-\bar\eta}{1+\eta}=(\sqrt2-1)e^{-i\pi/4}.
\]
The eigenvector is given by \(v_{\pi/4}=(v_{\pi/4}(1), v_{\pi/4}(2), \ldots, v_{\pi/4}(2m))\), where
\[
v_{\pi/4}(k)=
\begin{cases}
\eta^{\,k-1}U_{k-1}(\lambda_m), & 1\le k\le m-1,\\[1mm]
\eta^{\,k-1}\alpha\,U_{m-1}(\lambda_m), & k=m,m+1,\\[1mm]
\eta^{\,k-1}q\,U_{2m-k}(\lambda_m), & m+2\le k\le 2m.
\end{cases}
\]

\smallskip

\noindent
\textup{(iii) For \(H_C(-\pi/2)\):} let
\[
\beta:=\frac{1+i}{2}.
\]
The eigenvector is given by  \(v_{\pi/2}=(v_{\pi/2}(1), v_{\pi/2}(2), \ldots, v_{\pi/2}(2m))\), where
\[
v_{\pi/2}(k)=
\begin{cases}
(-i)^{\,k-1}U_{k-1}(\lambda_m), & 1\le k\le m-1,\\[1mm]
(-i)^{\,k-1}\beta\,U_{m-1}(\lambda_m), & k=m,m+1,\\[1mm]
(-i)^{\,k-1}U_{2m-k}(\lambda_m), & m+2\le k\le 2m.
\end{cases}
\]
\end{prop}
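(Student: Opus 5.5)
The plan is to combine the characteristic polynomial from Theorem~\ref{thm:circular-family} with a direct verification of the eigenvector equations, and then to locate $\lambda_m$ among all the eigenvalues by a sign argument on Chebyshev polynomials.

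\emph{Step 1: the spectrum is the root set of $P_m\widetilde P_m$.} By Theorem~\ref{thm:circular-family}, for every $\theta$ the characteristic polynomial of $H_C(\theta)$ is, up to the constant $2^{-2m}$,
\[
P_m(u)\,\widetilde P_m(u),\qquad \widetilde P_m(u):=U_m(u)+U_{m-1}(u)-U_{m-2}(u)=P_m(u)+2U_{m-1}(u).
\]
So the three matrices $H_C(0)$, $H_C(-\pi/4)$, $H_C(-\pi/2)$ share a spectrum, namely the roots of $P_m\widetilde P_m$. It then suffices to show that the largest root of $P_m\widetilde P_m$ is $\lambda_m$, and that the displayed vectors are nonzero and satisfy the eigenvalue equations.

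\emph{Step 2: $\lambda_m$ is the largest root of the product.} I will use the identity $U_m-U_{m-2}=2T_m$, where $T_m$ is the Chebyshev polynomial of the first kind. It gives
\[
P_m\bigl(\cos(\pi/m)\bigr)=2T_m\bigl(\cos(\pi/m)\bigr)-U_{m-1}\bigl(\cos(\pi/m)\bigr)=-2-0=-2<0.
\]
Since $P_m$ has positive leading coefficient, it has a root larger than $\cos(\pi/m)$, so $\lambda_m>\cos(\pi/m)$. The number $\cos(\pi/m)$ is the largest root of $U_{m-1}$, hence $U_{m-1}>0$ on $[\lambda_m,\infty)$. Also $P_m\ge 0$ on $[\lambda_m,\infty)$ because $\lambda_m$ is its largest root. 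Therefore
\[
\widetilde P_m=P_m+2U_{m-1}>0 \quad\text{on } [\lambda_m,\infty),
\]
so every root of $\widetilde P_m$ lies below $\lambda_m$. This shows that $\lambda_m$ is the largest eigenvalue of all three matrices.

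\emph{Step 3: checking the eigenvectors.} For each $\theta\in\{0,-\pi/4,-\pi/2\}$ I will verify $H_C(\theta)v=\lambda_m v$ row by row.
\begin{itemize}
\item[(a)] Conjugating by the diagonal gauge $D_\theta$ used in the proof of Theorem~\ref{thm:circular-family} removes the phase factors $\eta^{k-1}$ and $(-i)^{k-1}$. This leaves a tridiagonal matrix with off-diagonal entries $1/2$, perturbed only in rows $m-1,\dots,m+2$.
\item[(b)] The rows $k\le m-2$ reduce to the recurrence $2\lambda U_{k-1}=U_k+U_{k-2}$, with $U_{-1}=0$ for the first row. The rows $k\ge m+3$ reduce to the same recurrence run backwards in the index $2m-k$; for $\theta=0$ these tail entries vanish, consistent with $v_0(k)=0$ there.
\item[(c)] The four central rows are where the constants $\tfrac12$, $\alpha$, $q$, $\beta$ come in. They give four linear conditions. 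Using the recurrence, two of them should reduce to identities, and the remaining ones should reduce to $P_m(\lambda_m)=0$, i.e.\ $U_m(\lambda_m)=U_{m-1}(\lambda_m)+U_{m-2}(\lambda_m)$.
\end{itemize}
Finally, each vector is nonzero, since its first entry is $U_0=1$ (times a unimodular factor).

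The main obstacle is Step 3(c) for $\theta=-\pi/4$. There the phases $e^{\pm i\theta}$ on the extra entries $\mp E$ do not cancel. One has to check that $\alpha=1/(1+\eta)$ and $q=(1-\bar\eta)/(1+\eta)$ satisfy the central equations simultaneously, which amounts to an explicit computation with $\eta=e^{-i\pi/4}$.
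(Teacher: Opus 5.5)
Your proposal is correct and follows essentially the paper's argument. The ingredients match: the factorization $P_m\widetilde P_m$ from Theorem~\ref{thm:circular-family}; a diagonal gauge, namely $\diag(\eta^{k-1})$ and $\diag((-i)^{k-1})$, reducing each $H_C(\theta)$ to a tridiagonal matrix with two centered perturbations; and the Chebyshev recurrence on the tails. Your expectation for the central rows also checks out: rows $m-1$ and $m+2$ are identities via $\alpha(1+\eta)=1$ and $q=\alpha(1-\bar\eta)$, while rows $m$ and $m+1$ both reduce to $(2\lambda-1)U_{m-1}(\lambda)=2U_{m-2}(\lambda)$, i.e.\ $P_m(\lambda)=0$, via $1-\eta q=q+\bar\eta=2\alpha$.

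The only deviation is the test point in your sign argument: you use $\cos(\pi/m)$ with $\widetilde P_m=P_m+2U_{m-1}$, whereas the paper uses $P_m(1)=2-m\le 0$ and $\widetilde P_m>0$ on $[1,\infty)$. Both are valid, but the paper's choice also yields $\lambda_m\ge 1$, which Theorem~\ref{thm:even-overlap-comparison-compact} later relies on.
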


\begin{proof}[Proof sketch]
For each of the three matrices the eigenvalue equation reduces to a Hermitian matrix whose first super- and subdiagonal entries are all equal to \(1/2\), with only two modified centered entries. Away from the center, the eigenvalue equation is therefore
\[
2\lambda x_k=x_{k-1}+x_{k+1},
\]
so the tails are governed by the recurrence for \(U_k(\lambda)\).

The displayed vectors are obtained by solving the finite matching problem at the central rows \(m-1,m,m+1,m+2\). In all three cases the central compatibility condition reduces to
\[
(2\lambda-1)U_{m-1}(\lambda)=2U_{m-2}(\lambda),
\]
equivalently
\[
P_m(\lambda)=U_m(\lambda)-U_{m-1}(\lambda)-U_{m-2}(\lambda)=0.
\]

For \(H_C(0)\), one has
\[
\det(xI-H_C(0))=2^{-2m}P_m(x)Q_m(x),
\qquad
Q_m(x):=U_m(x)+U_{m-1}(x)-U_{m-2}(x),
\]
and \(Q_m(x)>0\) for \(x\ge 1\), whereas \(P_m(1)=2-m\le 0\) and \(P_m(x)\to+\infty\) as \(x\to+\infty\). Hence the largest eigenvalue of \(H_C(0)\) is the largest root \(\lambda_m\) of \(P_m\). Since \(C\) has the Circularity property, the matrices \(H_C(\theta)\) are isospectral, so the same \(\lambda_m\) is the largest eigenvalue of \(H_C(-\pi/4)\) and \(H_C(-\pi/2)\) as well.
\end{proof}

\begin{thm}\label{thm:even-overlap-comparison-compact}
Let \(C=C_{2m}(1,1)\), and let \(v_0,v_{\pi/4},v_{\pi/2}\) be the vectors from Proposition~\ref{prop:three-largest-eigenpairs-even-compact}. Set $\widehat v_\theta:=\frac{v_\theta}{\|v_\theta\|},
\, \theta\in\{0,\pi/4,\pi/2\}$.
Then
\[
\left|\inn{\widehat v_0}{\widehat v_{\pi/4}}\right|
>
\left|\inn{\widehat v_{\pi/4}}{\widehat v_{\pi/2}}\right|.
\]
In particular, \(C_{2m}(1,1)\) is not rotationally invariant.
\end{thm}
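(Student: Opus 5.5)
The plan is to compute both overlaps in closed form from the explicit eigenvectors of Proposition~\ref{prop:three-largest-eigenpairs-even-compact}, and then deduce non-invariance from Corollary~\ref{cor:magnitude}. The pairs $(0,\pi/4)$ and $(\pi/4,\pi/2)$ correspond to the angle pairs $(0,-\pi/4)$ and $(-\pi/4,-\pi/2)$. These have the same step $\delta=-\pi/4$ and different base points, so a strict inequality between the normalized overlaps contradicts Corollary~\ref{cor:magnitude}.

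To apply the corollary legitimately, I first check that $\lambda_m$ is a simple eigenvalue. By Theorem~\ref{thm:circular-family} the characteristic polynomial is $2^{-2m}P_mQ_m$. The argument in Proposition~\ref{prop:three-largest-eigenpairs-even-compact} shows $Q_m>0$ on $[1,\infty)$, so $Q_m(\lambda_m)\neq0$. Moreover $\lambda_m$ is a simple root of $P_m$: $P_m(1)\le0$ with $P_m\to+\infty$, and in the hyperbolic form below $P_m$ is strictly increasing beyond its largest root. Since $\lambda_m$ is simple, the eigenvector branch is simple and any analytic normalization changes inner products only by unimodular factors, as in the proof of Theorem~\ref{thm:gauwu_poly}. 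Hence the moduli of the normalized overlaps are intrinsic.

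Next I compute norms and inner products. Write $\lambda=\lambda_m=\cosh t$ with $t>0$; this is possible since $\lambda_m>1$, because $P_m(1)=2-m\le 0$ (with the case $m=2$ treated by hand). Then $U_k(\lambda)=\sinh((k+1)t)/\sinh t$. Set
\[
S:=\sum_{j=0}^{m-2}U_j(\lambda)^2,\qquad c:=U_{m-1}(\lambda)^2 .
\]
The tail of $v_{\pi/4}$ and of $v_{\pi/2}$ consists of $U_{2m-k}$, $k=m+2,\dots,2m$, that is, again $U_0,\dots,U_{m-2}$. Using $|\alpha|^2=1/(2+\sqrt2)$, $|q|^2=3-2\sqrt2$ and $|\beta|^2=1/2$, this gives
\[
\|v_0\|^2=S+\tfrac c2,\qquad \|v_{\pi/4}\|^2=(4-2\sqrt2)S+\tfrac{2c}{2+\sqrt2},\qquad \|v_{\pi/2}\|^2=2S+c.
\]

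For the inner products, both consecutive phase ratios equal $\eta=e^{-i\pi/4}$, because $(-i)/\eta=\eta$. So $\inn{v_{\pi/4}}{v_{\pi/2}}$ and $\inn{v_0}{v_{\pi/4}}$ share the head sum
\[
\Sigma:=\sum_{j=0}^{m-2}U_j^2\eta^{j}
\]
(up to an overall conjugation) and a centre term proportional to $c\,\eta^{m-1}(1+\eta)$ with coefficients $\alpha/2$ and $\bar\alpha\beta$ respectively. Only $\inn{v_{\pi/4}}{v_{\pi/2}}$ has an additional tail $\bar q\,\eta^{2m-1}\bar\Sigma$-type term, since $v_0$ vanishes on the tail. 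The sums $\Sigma$ and $S$ are finite geometric series in $e^{\pm2t}$ and $e^{\pm2t}\eta$, so they have closed forms in $e^{t}$.

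The final step compares $|\inn{v_0}{v_{\pi/4}}|^2\,\|v_{\pi/2}\|^2$ with $|\inn{v_{\pi/4}}{v_{\pi/2}}|^2\,\|v_0\|^2$; after clearing denominators the common factor $\|v_{\pi/4}\|^2$ drops out. I expect this comparison to be the main obstacle, because the sums are oscillatory and the phases interfere. My approach would be to use the dominance of the last terms: since $U_j$ grows like $e^{(j+1)t}$, $\Sigma$ and $S$ are dominated by $U_{m-2}^2$ and $U_{m-1}^2$. I would factor out $\eta^{m-1}U_{m-1}^2$, after which everything becomes a rational function of $x=e^{-2t}$ and $\rho=U_{m-2}/U_{m-1}$. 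The matching condition $(2\lambda-1)U_{m-1}=2U_{m-2}$ then eliminates $\rho=\lambda-\tfrac12$.

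This reduces the claim to an explicit inequality in the single variable $\lambda\in(1,\lambda_\infty]$, up to exponentially small corrections in $m$ that I would bound crudely. Here $\lambda_\infty$ is the limiting root, which satisfies $e^{-t}=\lambda-\tfrac12$. The small values $m=2,3$ I would check directly. If the crude error bounds turn out not to suffice, the fallback is to keep the exact geometric-sum expressions and verify positivity of the resulting polynomial in $e^{-t}$ on the admissible range.
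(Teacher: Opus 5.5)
Your setup is sound. The norm formulas are right, both consecutive phase ratios are indeed $\eta$, and pairing $(0,-\pi/4)$ with $(-\pi/4,-\pi/2)$ (common step $\delta=-\pi/4$) is how Corollary~\ref{cor:magnitude} should be invoked. The gap is that the strict inequality itself, which is the whole content of the theorem, is never proved. Dominance of the last terms, exponentially small corrections you ``would bound crudely'', hand checks for $m=2,3$, and a fallback positivity check of an unspecified polynomial are a programme, not an argument. You never state the limiting inequality or show it holds with a margin. Nor do you determine from which $m$ on the corrections are absorbed, so the set of cases to be checked by hand is not even fixed.

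The missing idea is that no estimate is needed, because the comparison is an exact identity, and you were one simplification away from it.
\begin{itemize}
\item Your own formulas give $\|v_{\pi/2}\|^2=2S+c=2\|v_0\|^2$ and $\|v_{\pi/4}\|^2=(2-\sqrt2)(2S+c)$. So it suffices to show $2|\inn{v_0}{v_{\pi/4}}|^2>|\inn{v_{\pi/4}}{v_{\pi/2}}|^2$.
\item The centre coefficients collapse to real constants: $\tfrac{\alpha}{2}(1+\eta)=\tfrac12$ and $\bar\alpha\beta(1+\eta)=\beta\eta=\tfrac1{\sqrt2}$.
\item Put $a_k:=U_{m-1-k}(\lambda_m)^2/U_{m-1}(\lambda_m)^2$ and $S_m:=\sum_{k=1}^{m-1}a_ke^{ik\pi/4}$. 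Then $\Sigma=c\,\eta^{m-1}S_m$, and your tail term $\bar q\,\eta^{2m-1}\bar\Sigma$ equals $(\sqrt2-1)\,c\,\eta^{m-1}\overline{S_m}$.
\end{itemize}
Hence, in your convention for the inner product,
\[
\inn{v_0}{v_{\pi/4}}=c\,\eta^{m-1}\bigl(\tfrac12+S_m\bigr),\qquad \inn{v_{\pi/4}}{v_{\pi/2}}=c\,\eta^{m-1}\bigl(\tfrac1{\sqrt2}+S_m+(\sqrt2-1)\overline{S_m}\bigr),
\]
and a two-line computation gives
\[
2\bigl|\tfrac12+S_m\bigr|^2-\bigl|\tfrac1{\sqrt2}+S_m+(\sqrt2-1)\overline{S_m}\bigr|^2=4(\sqrt2-1)(\im S_m)^2 .
\]
So the theorem is equivalent to $\im S_m\neq0$. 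Now $\im S_m=\sum_k a_k\sin(k\pi/4)>0$, because the $a_k$ are positive and strictly decreasing ($U_r(\lambda_m)$ increases in $r$, since $\lambda_m\ge1$). Also $\sin((k+4)\pi/4)=-\sin(k\pi/4)$, so pairing $k$ with $k+4$ inside blocks of eight gives positive contributions. This is the paper's argument. It is uniform in $m\ge2$ and needs neither the hyperbolic parametrization nor separate small cases.

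Separately, your simplicity argument does not work as stated: ``$P_m$ strictly increasing beyond its largest root'' does not exclude $P_m'(\lambda_m)=0$. It can be repaired using $\sinh t\,P_m(\cosh t)=-\cosh(mt)\bigl(\tanh(mt)-2\sinh t\bigr)$. The function $\tanh(mt)-2\sinh t$ is strictly concave on $[0,\infty)$, vanishes at $0$, and has slope $m-2>0$ there. So for $m\ge3$ it has a unique positive zero, crossed with nonzero slope. The case $m=2$ is immediate from $P_2=2(2x+1)(x-1)$. The paper applies the corollary without checking simplicity at all, so this addition is worthwhile once fixed.
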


\begin{proof}
Define
\[
M_m:=\frac12+\sum_{k=1}^{m-1}
\frac{U_{m-1-k}(\lambda_m)^2}{U_{m-1}(\lambda_m)^2},
\]
\[
S_m:=\sum_{k=1}^{m-1}
e^{ik\pi/4}\,
\frac{U_{m-1-k}(\lambda_m)^2}{U_{m-1}(\lambda_m)^2}.
\]

A direct substitution of the explicit coordinates from Proposition~\ref{prop:three-largest-eigenpairs-even-compact} gives the norm identities
\[
\|v_0\|^2=U_{m-1}(\lambda_m)^2M_m,
\qquad
\|v_{\pi/4}\|^2=2(2-\sqrt2)\,U_{m-1}(\lambda_m)^2M_m,
\]
\[
\|v_{\pi/2}\|^2=2\,U_{m-1}(\lambda_m)^2M_m,
\]
as well as the scalar product formulas
\[
\inn{v_0}{v_{\pi/4}}
=
e^{-i(m-1)\pi/4}U_{m-1}(\lambda_m)^2\left(\frac12+S_m\right),
\]
and
\[
\inn{v_{\pi/4}}{v_{\pi/2}}
=
e^{-i(m-1)\pi/4}U_{m-1}(\lambda_m)^2
\left(
\frac1{\sqrt2}+S_m+(\sqrt2-1)\overline{S_m}
\right).
\]
This yields
\[
\left|\inn{\widehat v_0}{\widehat v_{\pi/4}}\right|
=
\frac{\left|\frac12+S_m\right|}
{\sqrt{2(2-\sqrt2)}\,M_m},
\]
and
\[
\left|\inn{\widehat v_{\pi/4}}{\widehat v_{\pi/2}}\right|
=
\frac{\left|\frac1{\sqrt2}+S_m+(\sqrt2-1)\overline{S_m}\right|}
{2\sqrt{2-\sqrt2}\,M_m}.
\]

Write \(S_m=x_m+iy_m\). A direct algebraic simplification gives
\[
2\left|\frac12+S_m\right|^2-
\left|\frac1{\sqrt2}+S_m+(\sqrt2-1)\overline{S_m}\right|^2
=
4(\sqrt2-1)\,y_m^2.
\]
Thus it is enough to prove that \(\Im S_m=y_m>0\).

But
\[
\Im S_m=
\sum_{k=1}^{m-1}
\sin\frac{k\pi}{4}\,
\frac{U_{m-1-k}(\lambda_m)^2}{U_{m-1}(\lambda_m)^2}.
\]
Since \(\lambda_m>1\), the sequence \(U_r(\lambda_m)\) is strictly increasing in \(r\), so the coefficients
\[
a_k:=\frac{U_{m-1-k}(\lambda_m)^2}{U_{m-1}(\lambda_m)^2}
\]
are positive and strictly decreasing in \(k\). Grouping the above sine sum into blocks of length \(8\), and using
\[
\sin\frac{(k+4)\pi}{4}=-\sin\frac{k\pi}{4},
\]
one sees that each full block contributes a strictly positive amount, while any terminal incomplete block contributes a nonnegative amount. Hence \(\Im S_m>0\), and the desired strict inequality follows.

If \(C_{2m}(1,1)\) were rotationally invariant, Corollary~\ref{cor:magnitude} would force these two same-step magnitudes to coincide. This contradiction shows that \(C_{2m}(1,1)\) is not rotationally invariant.
\end{proof}

\bibliographystyle{amsplain}
\bibliography{master}

\end{document}